\documentclass[a4paper,12pt]{amsart}   
\def\myfnt{\ifx\protect\@typeset@protect\expandafter\footnote\else\expandafter\@gobble\fi}
\makeatother
\usepackage{amsfonts}
 \usepackage{amssymb}
 \usepackage{amsmath,amsxtra,amsthm}

\usepackage[usenames]{color}
\usepackage[mathscr]{eucal}

 \usepackage{dsfont}

\usepackage{hyperref}

\usepackage{wrapfig}

\usepackage[all]{xy}

\usepackage{shuffle}
\usepackage{scalerel}[2016/12/29]

\newtheorem{theorem}{Theorem}
\newtheorem{deff}{Definition}

\newtheorem{proposition}{Proposition}
\newtheorem{example}{Example}
\newtheorem{lemma}{Lemma}

\newtheorem{prop}{Proposition}
\newtheorem{rem}{Remark}

\newcommand{\mto}{\mapsto}
\newcommand{\bqa}{\begin{eqnarray}}
\newcommand\eqa {\end{eqnarray}}
\newcommand{\beq}{\begin{eqnarray}}
\newcommand{\beqn}{\begin{eqnarray}\nonumber}
\newcommand{\eeq}{\end{eqnarray}}
\newcommand{\be}{\begin{array}}
\newcommand{\ee}{\end{array}}

 \newcommand{\pt}{\partial}

   \newcommand\vf\varphi

 \newcommand{\cM}{{\mathcal M}}

 \newcommand{\cI}{{\mathcal I}}

 \newcommand{\cF}{{\mathcal{F}}}

\newcommand{\cE}{{\mathcal E}}

 \newcommand{\cv}{\mathpzc{v}}

 \newcommand{\Z}{{\mathbb Z}}

 \newcommand{\N}{{\mathbb N}}

\newcommand{\ua}{{\underline{a}}}
\newcommand{\ub}{{\underline{b}}}

   \def\e{\epsilon}

\DeclareFontFamily{OT1}{pzc}{}
\DeclareFontShape{OT1}{pzc}{m}{it}{<-> s * [1.15] pzcmi7t}{}
\DeclareMathAlphabet{\mathpzc}{OT1}{pzc}{m}{it}

\usepackage{tikz}
 \usetikzlibrary{math}
 \usetikzlibrary{arrows.meta, calc}

 \usepackage{pgfplots}
\pgfplotsset{compat=1.18}
    \usepgfplotslibrary{patchplots}

    \newcommand{\cg}{\mathpzc{g}}

\newcommand{\p}{\mathrm{pr}}
\newcommand{\wpr}{\widehat{\p}}
\newcommand{\wE}{\widehat{E}}
\newcommand{\wF}{\widehat{F}}
\newcommand{\Exp}{\mathrm{Exp}}
\newcommand{\vx}{{\vartriangle\!\!x}}

   \usepackage{bm}
   \usepackage{mathtools,graphicx}
   \usepackage{sfmath}

\begin{document}
    
\bibliographystyle{amsplain}

\title{The functor between two categories of $\Z-$graded manifolds}

\author{Martha Valentina Guarin Escudero$^{1,2}$}

\author{Alexei Kotov$^1$}
\address{1. Faculty of Science, University of Hradec Kralove, Rokitanskeho 62, Hradec Kralove
50003, Czech Republic}

\address{2. Mathematical Institute of Charles University,
Sokolovská 49/83, Prague-8 186 75, Czech Republic }

\email{val.guarine@matfyz.cuni.cz, oleksii.kotov@uhk.cz}

\keywords{Graded supermanifolds, formal neighborhood, Borel-Whitney theorem} 

\begin{abstract} 
This paper examines \(\mathbb{Z}\)-graded manifolds as semiformal homogeneity structures, comparing two polynomial filtrations from their local models. In finite dimensions, these are componentwise equivalent, yielding isomorphic graded completions; generally, one induces a finer topology. By the Batchelor-Gawedzki-type theorem (Kotov--Salnikov), every \(\mathbb{Z}\)-graded manifold over base M is noncanonically isomorphic to one associated with its canonical \(\mathbb{Z}\)-graded bundle (Batchelor-Gawedzki bundle). In finite dimensions, this is the formal neighborhood of the zero section with the induced homogeneity structure. Kotov-Salnikov's graded Borel lemma extends weight-k functions from the formal neighborhood to smooth ones of the same weight. Here, this generalizes to a Borel--Whitney theorem: homogeneity morphisms of formal neighborhoods lift to smooth homogeneity maps between Batchelor-Gawedzki bundles. Categorically, let \(\mathsf{B}_{\mathbb{Z}}\) be the category of finite-dimensional \(\mathbb{Z}\)-graded vector bundles with homogeneity morphisms, and \(\mathsf{Man}_{\mathbb{Z}}\) the category of finite-dimensional \(\mathbb{Z}\)-graded manifolds. The functor \(\mathsf{F}\colon \mathsf{B}_{\mathbb{Z}} \to \mathsf{Man}_{\mathbb{Z}}\) sends bundles to formal neighborhoods of their zero sections. The graded Batchelor-Gawedzki and Borel-Whitney theorems imply \(\mathsf{F}\) is full and surjective on objects.
\end{abstract}

\maketitle

\renewcommand{\theequation}{\thesection.\arabic{equation}}
\section*{Introduction}\label{sec:introduction}

\noindent Graded manifolds provide the natural geometric language for theories where physical fields and their symmetries possess different degrees of "ghost number" or "fermion number." In the BV-BRST formalism, the configuration space is extended to a graded manifold (a shifted cotangent bundle) where the classical master equation is expressed via a homological vector field, allowing for the consistent quantization of gauge theories with open algebras. Similarly, in supersymmetry, graded manifolds underpin the concept of superspace, where the mixing of bosonic and fermionic coordinates explains the pairing of particles with different spins and facilitates the construction of invariant Lagrangians. A careful understanding of the underlying mathematics of graded manifolds is essential, as it ensures that the transition from classical geometry to the algebraic structures of physics is both rigorous and consistent.

\smallskip\noindent The concept underlying vector spaces with grading determined by an abelian group or monoid exhibits independence from the particular selection of the group or monoid.
 Consider a vector space defined over a field $k$ with grading imposed by an abelian monoid $\Gamma$. This structure manifests as a direct sum decomposition:
\[
V = \bigoplus_{\gamma \in \Gamma} V_\gamma,
\]
where each $V_\gamma$ represents the homogeneous subspace associated with degree (or weight) $\gamma$. Linear mappings $\phi: V \to W$ between such graded spaces are called morphisms if they are \emph{weight-preserving}, that is, if $\phi(V_\gamma) \subseteq W_\gamma$ for every $\gamma \in \Gamma$. A linear map is said to act with a \emph{constant weight shift}, or to be of pure weight, if there exists a fixed $\delta \in \Gamma$ such that $\phi(V_\gamma) \subseteq W_{\gamma + \delta}$ for all $\gamma \in \Gamma$.
The category of $\Gamma$-graded vector spaces and morphisms between them carries a natural tensor (monoidal) structure, defined by the usual tensor product of vector spaces together with the grading rule
\[
(V \otimes W)_\gamma = \bigoplus_{\alpha + \beta = \gamma} V_\alpha \otimes W_\beta.
\]
This monoidal structure facilitates the formation of \emph{associative graded algebras}, interpreted as monoids within this category. Introducing a compatible braiding $\sigma_{V,W}: V \otimes W \to W \otimes V$ transforms it into a \emph{braided tensor category}; if this braiding squares to the identity morphism (i.e., $\sigma_{W,V} \circ \sigma_{V,W} = \mathrm{id}_{V \otimes W}$), it becomes \emph{symmetric}, yielding \emph{commutative associative graded algebras}. In geometric or physical applications, this braiding often follows a \emph{parity-based sign convention}.

\smallskip\noindent Translating graded algebra into the realm of manifolds or generalized spaces encounters substantial obstacles. No single canonical methodology exists for defining ``graded manifolds'' universally, owing to divergent choices in coordinate systems and permissible function classes across the literature. Consequently, structures deemed ``analogous'' may correspond to fundamentally distinct realizations, influenced by whether the context is smooth, analytic, or formal, and by the specific grading monoid employed.

\smallskip\noindent The $\mathbb{Z}_2$-graded case stands out due to its extensive development in theory. The tensor symmetry follows the sign rule
\[
v \otimes w \mapsto (-1)^{p(v)p(w)} w \otimes v,
\]
where $p: \mathbb{Z}/2\mathbb{Z}$ denotes the parity of homogeneous elements. Formally, a \emph{supermanifold} \cite{Berezin1966,Leites1977} is a \emph{locally ringed space} over a smooth base, where the structure sheaf  is locally free as a sheaf of $\mathbb{Z}_2$-graded commutative algebras, generated locally by smooth functions of even (weight 0) commuting variables and \emph{Grassmann polynomials} in odd (weight 1) anticommuting variables.
Supermanifold morphisms are morphisms of $\mathbb{Z}_2$-graded commutative ringed spaces covering smooth base maps.


\smallskip\noindent The Batchelor \cite{Batchelor1979} (or Gawedzki \cite{Gawedzki1977}) theorem  guarantees that every real smooth supermanifold $\mathcal{M}$ above base $M$ admits a canonical vector bundle $E \to M$ rendering its structure sheaf non-canonically isomorphic to $\Gamma(M, \Lambda^\bullet E^*)$. Equivalently, $\mathcal{M} \cong \Pi E$, where $\Pi E$ bears the sheaf of exterior forms on $E$.
This bundle $E$ arises uniquely (up to isomorphism) as the normal bundle to the canonical embedding of $M$ in $\mathcal{M}$. Such a classification eludes complex-analytic supermanifolds, permitting \emph{split} (globally isomorphic to $\Pi E$) and \emph{non-split} forms.

\smallskip\noindent Within the ringed space paradigm, $\mathbb{N}$-graded manifolds employ sheaves locally blending smooth functions of degree-$0$ variables with polynomials in positive-degree variables. $\mathbb{Z}$-graded manifolds are (component-wise) locally ringed spaces locally modelled by the completion with respect to the canonical filtration\footnote{The idea of the filtration was inspired by the work of Felder and Kazhdan \cite{felder-kazhdan}, where it arose in the context of affine varieties.} of the polynomial algebra in variables of non-zero weight, with coefficients smooth functions of weight-$0$ variables. This notion was formalized by Kotov--Salnikov~\cite{KS2024}, with a parallel formulation by Vysok\'y \cite{vysoky}. Real smooth settings admit Batchelor-Gawedzki analogues: Roytenberg's for $\mathbb{N}$-grading, Kotov--Salnikov's for $\mathbb{Z}$-grading. Complex-analytic counterparts fail, necessitating distinctions between split and non-split types.

\smallskip\noindent
 Alternatively, Grabowski and Rotkiewicz \cite{grab-hom} recast $\mathbb{N}$-graded manifolds via $\mathbb{R}_{\geq 0}$-actions on manifolds, with infinitesimal generator termed the \emph{Euler vector field}. Locally, this induces rescalings of homogeneous (with respect to the Euler field) variables by nonnegative weights. The action's fixed points form a smooth submanifold serving as base for a canonical graded bundle, whose total space is the original graded manifold. This finds significant applications in differential geometry; for instance, it provides a suitable formalism for objects in the VB category.

\smallskip\noindent Another global approach to $\Z$-graded manifolds (due to \cite{Voronov2002}; 
see also applications in \cite{GrabowskaGrabowski2024, GrabowskaGrabowski2025, Grabowskaetal2025, KS-HCP} and \cite{BruceGrabowskiRotkiewicz2016, BruceGrabowskaGrabowski2018})
views them as smooth (super)manifolds equipped with an even vector field of the form of an Euler field (allowing both positive and negative weights) near its zero locus.
The special form of this vector field guarantees that the zero 
locus is a smooth submanifold. These objects, called 
\emph{homogeneity manifolds} or \emph{manifolds with homogeneity 
structure}, form a category whose morphisms are smooth maps 
preserving the Euler fields. 
The zero-locus formal neighborhoods of homogeneity manifolds are 
$\Z$-graded manifolds in the original sense (termed \emph{semiformal} 
by Kotov--Salnikov \cite{KS2024}). By the Batchelor theorem~\cite{KS2024}, every semiformal manifold is the formal neighborhood of the zero locus of some homogeneity structure: specifically, the one on its Batchelor bundle $E$. Furthermore, every weight-$k$ semiformal function $f$ on $E$ 
extends to a smooth function $\widetilde{f}$ on $E$ of the same 
weight, matching the normal jet of $f$ at the base $M$. 
This \emph{graded Borel theorem} \cite{borel1901, bourbaki1967} extends the classical Borel 
lemma, which realizes formal power series as smooth jets, even 
when divergent and non-analytic, while requiring preservation 
of the weight grading.

\smallskip\noindent 
The paper is organized as follows. Section~\ref{sec:semiformal} compares two filtrations whose completions of the polynomial algebra yield different local models of $\mathbb{Z}$-graded manifolds. It is shown that in the finite-dimensional case these filtrations are equivalent on homogeneous components of each weight, producing isomorphic graded completions.
 In the case of finite graded dimension, inclusion holds only one way: one filtration induces a finer filtered topology. Section~\ref{sec:formal_homogeneity} explains, through the local model example, in what sense $\mathbb{Z}$-graded manifolds should be regarded as semiformal homogeneity structures. Section~\ref{sec:global_Borel} proves a Borel--Whitney-type theorem for morphisms of finite-dimensional $\mathbb{Z}$-graded manifolds. The concluding Section~\ref{sec:beyond}, \emph{Beyond the Present Scope: Homogeneity structures}, outlines several examples and perspectives that motivated this study and indicate possible future research.
 Appendix~\ref{sec:app} reviews filtrations and completions, proving a lemma used in Section~\ref{sec:semiformal}: graded algebras with equivalent filtrations on homogeneous components have isomorphic completions. Appendix~\ref{sec:local_Borel} recalls the local graded Borel lemma.

\section{$\mathbb{Z}$-graded algebras: local model}\label{sec:semiformal}

\noindent In this section, we discuss local models of free $\Z$-graded algebras. We focus on two filtrations on the symmetric algebra generated by nonzero-weight variables with coefficients in smooth functions on an open coordinate chart. We investigate the relationship between these filtrations and their associated completions.

\smallskip\noindent Let $V$ a finite-dimensional $\mathbb{Z}$-graded real vector space,
\beq\label{eq:graded_vector_space}
V = \bigoplus_{i \in \mathbb{Z}} V_i.
\eeq

\begin{deff} \label{def-finite-gr}  
A graded vector space $V$ (resp. graded manifold $M$, to be defined later) is said to be
\begin{itemize}
\item \emph{of finite degree} if the maximal and minimal degrees of generating elements are bounded, so that the decomposition \eqref{eq:graded_vector_space} is finite in both directions.
\smallskip
\item \emph{of finite graded dimension} if $\dim(V_i) < \infty$ for all $i \in \mathbb{Z}$.
\smallskip
\item \emph{of finite dimension} if it is both of finite degree and finite graded dimension.
\end{itemize}
\end{deff}

\noindent
Define $\cE_i=A_0\otimes V_i$ for all $i\in\Z\setminus \{0\}$, where $A_0 = C^\infty(V_0)$, and $\cE=\bigoplus_{i}\cE_i$.
Let $A$ be the graded symmetric algebra generated by $\cE$ over $A_0$:

\[
A = \mathrm{Sym}_{A_0} \bigg( \bigoplus_{i \neq 0} \cE_i \bigg)
= T_{A_0} \bigg( \bigoplus_{i \neq 0} \cE_i \bigg) \Big/ \big\langle
v_1 \otimes v_2 - (-1)^{p(v_1)p(v_2)} v_2 \otimes v_1
\big\rangle,
\]

\noindent where $p(v) \in \mathbb{Z}_2$ captures weight modulo 2 parity. Thus $A = \bigoplus_{i \in \mathbb{Z}} A_i$ is supercommutative $\mathbb{Z}$-graded algebra, supporting two complete filtrations by graded ideals (for the notion of filtration, see Definitions \ref{app:deff_filtration} and \ref{app:def_graded_filtered} in the Appendix.).

\smallskip\noindent{\bf Filtration 1.}

\[
A = F^0 A \supset F^1 A \supset F^2 A \supset \cdots,
\]
via $F^p A = \langle \bigoplus_{i \geq p} A_i \rangle$, where the latter is the graded ideal generated by all homogeneous elements of weight $\ge p$. 

\smallskip\noindent{\bf Filtration 2.}

\[
A = {F_{pol}}^0 A \supset {F_{pol}}^1 A \supset {F_{pol}}^2 A \supset \cdots,
\]
where ${F_{pol}}^p A = \big(\cI\big)^p$ for $\cI=\langle \bigoplus_{i \ne 0} A_i \rangle$. Here, $\big(\cI\big)^p$ denotes the corresponding powers of the ideal.

\medskip\noindent Following the completion scheme for graded-filtered algebras from Proposition \ref{prop:graded-completion} (Appendix~\ref{sec:app}), we define the completions of $A$ with respect to both filtrations. More precisely, degreewise completion yields
\beq
\widehat{A}_i &=& \varprojlim_p A^p_i\,, \hspace{1.5mm}\mathrm{where}\hspace{1.5mm} A^p_i \colon = (A / F^p A)_i \\
\widehat{B}_i &=& \varprojlim_p B^p_i\,, \hspace{1.5mm}\mathrm{where}\hspace{1.5mm} B^p_i \colon = (A / {F_{pol}}^p A)_i
\eeq

\noindent The corresponding $\Z-$graded algebras are $\widehat{A}=\bigoplus_i \widehat{A}_i$ and $\widehat{B}=\bigoplus_i \widehat{B}_i$.

\begin{theorem}[Equivalence of completions]\label{thm:equiv_compl}\normalfont
     Provided $V$ is finite-dimensional, $\widehat{A}$ and $\widehat{B}$ are isomorphic as $\Z$-graded supercommutative algebras.
\end{theorem}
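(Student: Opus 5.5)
We plan to derive the theorem from the appendix lemma announced in the introduction: if two filtrations of a graded algebra are equivalent on each homogeneous component, then their graded completions are isomorphic, via the identity on $A$. So the whole task is to show that, for every fixed weight $i$, the filtrations $\{(F^pA)_i\}_p$ and $\{({F_{pol}}^pA)_i\}_p$ are cofinal in each other. That is, for every $p$ there is $q$ with $({F_{pol}}^qA)_i\subset (F^pA)_i$, and conversely for every $p$ there is $q'$ with $(F^{q'}A)_i\subset({F_{pol}}^pA)_i$. The resulting isomorphism $\widehat{A}_i\cong\widehat{B}_i$ is induced by the identity of $A$. It therefore commutes with the maps from $A$ and is multiplicative on the dense images. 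Passing to limits, it is an isomorphism of $\Z$-graded supercommutative algebras.

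First we describe both filtrations on monomials. Fix a homogeneous basis of $V_{\neq0}$, so that $A_i$ is spanned over $A_0$ by monomials $\xi_{1}\cdots\xi_{m}$ of total weight $i$, each $\xi_j$ having weight $w_j\ne0$. Let $N=\max\{|w|: V_w\neq 0\}$ and $n=\min\{|w|\}\ge1$; both exist because $V$ is finite-dimensional. The ideal ${F_{pol}}^pA$ is spanned over $A_0$ by monomials of polynomial length $m\ge p$.

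For $F^pA$ we claim that a monomial lies in $F^pA$ exactly when some sub-product of its factors has weight $\ge p$. One direction is immediate. Conversely, $F^pA$ is generated by homogeneous elements of weight $\ge p$, each an $A_0$-combination of monomials of that weight. Hence any monomial in $(F^pA)_i$ is a combination of monomials that are divisible by a monomial of weight $\ge p$. Since monomials form an $A_0$-basis, the criterion holds monomialwise. In particular, the positive part of the monomial, i.e. the product of its positive-weight factors, has weight $\ge p$ whenever the monomial lies in $F^pA$.

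Next we prove the inclusion $(F^qA)_i\subset({F_{pol}}^pA)_i$ with $q=pN$. If the positive part of a monomial has weight $\ge pN$, then it has at least $p$ factors, so the monomial has length $\ge p$.

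Then we prove the converse inclusion $({F_{pol}}^qA)_i\subset(F^pA)_i$ for suitable $q$. Take a weight-$i$ monomial of length $m\ge q$. Let $P$ and $Q$ be the total positive weight and the total absolute negative weight, so that $P-Q=i$ and $P+Q\ge mn\ge qn$. Then $P=(P+Q+i)/2\ge (qn+i)/2$, and the sub-product of all positive factors has weight $P$. Choosing $q$ with $(qn+i)/2\ge p$ puts the monomial in $F^pA$. The answer depends on $i$, which is why the equivalence is only componentwise.

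We expect the main obstacle to be the monomial criterion for $F^pA$. The reason is that $F^pA$ is defined as an ideal generated by arbitrary homogeneous elements with $A_0$-coefficients, and $A_0$ is infinite-dimensional. We would handle this by noting that $A$ is free over $A_0$ on monomials, which holds because the odd generators anticommute and so repeated odd factors vanish. Multiplying a generator by a monomial then expands into monomials each containing a factor sub-product of weight $\ge p$. The remaining steps are bookkeeping with the bounds $n$ and $N$, plus invoking the appendix lemma.
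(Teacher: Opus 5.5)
Your proposal is correct and follows essentially the same route as the paper. You reduce to componentwise equivalence via the appendix's graded-completion proposition, describe both filtrations monomially (positive-part weight versus length), and bound each by the other; your converse bound $P\ge (qn+i)/2$ is exactly the paper's $k_l=\tfrac12(r+\kappa l)$ with $\kappa=n$. The differences are minor: your forward choice $q=pN$ ignores the negative factors, so it is cruder than the paper's $l_k$ but independent of the weight, and you justify the monomial description of $F^pA$, which the paper only asserts.
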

\noindent
\begin{proof} By Proposition~\ref{prop:graded-completion}, it suffices to show that the two filtrations are equivalent on each $A_r$, $r\in\Z$. 

\medskip\noindent
Let ${\xi}=\{\xi_i\}_{i=1}^n$ and ${\eta}=\{\eta_j\}_{j=1}^m$ be homogeneous bases of $V_+ = \bigoplus_{r>0} V_r$ and $V_- = \bigoplus_{r<0} V_r$ with weights $\bm{\alpha}=(\alpha_1,\ldots, \alpha_n)$ and $-\bm{\beta}=(-\beta_1, \ldots, -\beta_m)$, respectively. By construction, all integers $\alpha_i$ and $\beta_j$ are positive. Thus there exist positive integers
\[
\alpha_{\min} = \min\{\alpha_i\}, \quad 
\alpha_{\max} = \max\{\alpha_i\},
\]
\[
\beta_{\min} = \min\{\beta_j\}, \quad 
\beta_{\max} = \max\{\beta_j\},
\]
\[
\kappa = \min\{\alpha_{\min}, \beta_{\min}\}.
\]

\noindent A monomial $\xi_1^{p_1} \cdots \xi_n^{p_n} \eta_1^{q_1} \cdots \eta_m^{q_m}$ belongs to $A_r$ if and only if
\begin{equation}\label{eq:total_weight}
\sum_{i=1}^n \alpha_i p_i - \sum_{j=1}^m \beta_j q_j = r.
\end{equation} 

\noindent Notice that $F^k A_r$ and ${F_{pol}}^l A_r$ are generated by monomials satisfying 
$\sum_{i=1}^n \alpha_i p_i \ge k$ and $\sum_{i=1}^n p_i + \sum_{j=1}^m q_j \ge l$, respectively, 
provided that the total weight condition \eqref{eq:total_weight} holds. 

\noindent To prove filtration equivalence, it suffices to exhibit cofinal sequences $(l_k)$ and $(k_l)$ satisfying
\begin{equation}\label{eq:F1_to_F2}
\sum_{i=1}^n \alpha_i p_i \ge k \quad \Rightarrow \quad \sum_{i=1}^n p_i + \sum_{j=1}^m q_j \ge l_k
\end{equation}
and the symmetric condition for $(k_l)$: 
\begin{equation}\label{eq:F2_to_F1}
  \sum_{i=1}^n p_i + \sum_{j=1}^m q_j \ge l \quad \Rightarrow \sum_{i=1}^n \alpha_i p_i \ge k_l  
\end{equation}
The following choice works:
\[
l_k =\max\left\{
k\left(
\frac{1}{\alpha_{\max}}+\frac{1}{\beta_{\max}}
\right)-
\frac{r}{\beta_{\max}},0
\right\}
\,,\quad
k_l=
\max\left\{
\frac{1}{2}(r+\kappa l),0
\right\}.
\]
Evidently, $\lim_{k\to\infty} l_k = \lim_{l\to\infty} k_l = \infty$; therefore, the sequences $(l_k)$ and $(k_l)$ are cofinal, and thus the filtrations are equivalent.
 $\square$  
\end{proof}

\section{$\Z$-graded manifolds via semiformal homogeneity structure}\label{sec:formal_homogeneity}

\noindent In this section, we continue working with graded filtered algebras and develop a perspective on finite-dimensional $\Z$-graded spaces as semiformal homogeneity structures.

\smallskip

\noindent It is worth employing the topological approach to filtered algebras (see Remark \ref{app:top_remark} in the Appendix~\ref{sec:app}): they are viewed as topological spaces where the filtration ideals form a fundamental system of neighborhoods of zero. The completion of such an algebra is then its topological completion, obtained by adjoining limits of all Cauchy sequences. Recall that a Cauchy sequence in a filtered algebra \(A\) is a sequence \(\{a_i\}_{i\ge 0}\subset A\) such that for every \(p\) there exists \(k\) with \(a_i - a_j \in F^p A\) for all \(i,j\ge k\). Equivalent filtrations yield isomorphic completions in both the algebraic and topological senses. In particular, this means that every Cauchy sequence with respect to the topology of the first filtration converges in the completion with respect to the second, and vice versa. On the other hand, if the relation holds in only one direction, it induces a partial order rather than an equivalence. More precisely, we say that $F_1\le F_2$ if there exists a cofinal sequence $(l_k)$ such that $F_1^k A\subset F_2^{l_k}A$ for all $k\ge 0$. In this case, every Cauchy sequence with respect to $F_1$ ($F_1-$Cauchy sequence) converges in the completion $\hat{A}_{F_2}$ of $A$ with respect to $F_2$, but not conversely. Obviously, $F_1\sim F_2$ if and only if $F_1\le F_2$ and $F_2\le F_1$.

\smallskip\noindent Returning to the filtrations \(F\) and \({F_{pol}}\) considered earlier, we showed in Theorem~\ref{thm:equiv_compl} that they are equivalent when \(V\) is finite-dimensional. If \(V\) is infinite-dimensional, then only \({F_{pol}} \le F\) holds, as the sequence \((k_l)\) defined above applies equally to the finite-dimensional graded case. However, the reverse inequality does not hold in general, as shown by the following example.

\begin{example}[$F\le {F_{pol}}$ fails reversely for $\dim (V)=\infty$]\label{ex:connverge}\normalfont
   Let \(V\) be a graded vector space with a homogeneous basis consisting of vectors \(\xi_i\) and \(\eta_i\) for \(i \ge 0\), of weights \(i\) and \(-i\), respectively. Then the sequence \((f_k)\) with \(f_k (\xi,\eta)= \sum_{i=1}^j \xi_i \eta_i\) is \(F\)-Cauchy but not \({F_{pol}}\)-Cauchy; thus, it converges in \(\hat{A}_F\) but not in \(\hat{A}_{{F_{pol}}}\). In other words, the weight-$0$ infinite series $f (\xi,\eta) = \sum_{i>0} \xi_i \eta_i$ belongs to $\hat{A}_F$ but not to $\hat{A}_{{F_{pol}}}$.
\end{example}

\noindent 
An operator $\, \phi \colon A \to A \,$ is \emph{continuous in the $F$-filtered topology} if for every $\, p \ge 0 \,$ there exists $\, q \ge 0 \,$ such that $\, \phi(F^q A) \subset F^p A \,$ and the map $p \mapsto q$ is cofinal.
Clearly, any operator that shifts the filtration by a constant \(\delta\), i.e., satisfies \(\phi(F^pA)\subset F^{p+\delta}A\) for all \(p\ge 0\), is continuous. A continuous operator admits a unique extension to the completion (Theorem~\ref{app:thm_cofinal}). An example of a continuous operator is the Euler vector field, which in homogeneous coordinates $x_1, \ldots, x_d$ of weight $0$, $\xi_i$ of weight $\alpha_i$ ($i=1, \ldots, n$), and $\eta_j$ of weight $-\beta_j$ ($j=1, \ldots, m$) is given by\footnote{The Euler vector field retains the same form \eqref{eq:Euler} on spaces of finite graded dimension and remains continuous with respect to both filtrations, as it preserves each filtration.
}
\begin{equation}\label{eq:Euler}
\e = \sum_i \alpha_i \xi_i \partial_{\xi_i} - \sum_j \beta_j\eta_j \partial_{\eta_j}.
\end{equation}

\noindent For each open subset $U \subset V_0$, define $A_0(U) = C^\infty(U)$, $\cE_i (U) = A_0 (U) \otimes V_i$ for $i \neq 0$, and
\begin{equation}
A(U) = \mathrm{Sym}_{A_0(U)} \left( \bigoplus_{i \neq 0} \cE_i (U)\right).
\end{equation}

\noindent We obtain a presheaf of $\Z$-graded algebras over $V_0$ together with two filtrations defined as above. When $V$ is finite-dimensional, each quotient $A(U)/{F_{pol}}^p A(U)$ is a free $A_0(U)$-module of finite rank. Consider the formal completion of $A(U)$ with respect to ${F_{pol}}$ (of the whole algebra, not componentwise):
\[
A_{\mathrm{form}}(U) = \varprojlim_p \big(A(U)/{F_{pol}}^p A(U)\big).
\]

\noindent The next proposition shows that finite-dimensional graded geometry embeds naturally into the "usual" formal completion. The proof of the statements below is straightforward and relies on standard facts about formal power series in finitely many generators (which can always be taken homogeneous).

\begin{proposition}\label{prop:graded_inside_formal}\normalfont
    Assume that $V$ is finite-dimensional. Then we have:
\begin{enumerate}
\item $U \mapsto A_{\mathrm{form}}(U)$ is a sheaf of supercommutative algebras (as the projective limit of such sheaves).
\smallskip
\item The Euler vector field defined by \eqref{eq:Euler} is a derivation of the sheaf.
\smallskip
\item The componentwise completion of each $A(U)_r$ with respect to the filtration is a subsheaf of $A_{\mathrm{form}}(U)$ consisting of homogeneous elements of weight $r$ with respect to the Euler vector field ${\e}$, i.e., those $f$ satisfying ${\e}(f) = r f$.
\smallskip
\item Every derivation $\cv$ of $\hat{A}$ of weight $r$ extends to $A_{\mathrm{form}}(U)$ of the same weight with respect to ${\e}$, i.e., it satisfies $[{\e}, \cv] = r \cv$. Moreover, this equation characterizes all homogeneous derivations of the corresponding graded algebra $\hat{A}$ of weight $r$.
\end{enumerate}
\end{proposition}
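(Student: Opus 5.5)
The proof works in a homogeneous chart. Take coordinates $x$ of weight $0$ on $U\subset V_0$, together with the homogeneous bases $\xi_i$ (weights $\alpha_i>0$) and $\eta_j$ (weights $-\beta_j<0$) used in the proof of Theorem~\ref{thm:equiv_compl}. Since $V$ is finite-dimensional, each quotient $A(U)/{F_{pol}}^pA(U)$ is a free $A_0(U)$-module with a finite basis of monomials $\xi^{\mathbf p}\eta^{\mathbf q}$ of total polynomial degree $<p$. Passing to the limit, $A_{\mathrm{form}}(U)$ is identified with formal power series $\sum_{\mathbf p,\mathbf q} f_{\mathbf p\mathbf q}(x)\,\xi^{\mathbf p}\eta^{\mathbf q}$ with arbitrary smooth coefficients; odd variables appear only to exponent $\le 1$.

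For (1), the presheaves $U\mapsto A(U)/{F_{pol}}^pA(U)$ are finite free $C^\infty$-modules, hence sheaves. Restriction maps act coefficientwise, and inverse limits of sheaves are sheaves, so $A_{\mathrm{form}}$ is a sheaf. The supercommutative product is well defined because ${F_{pol}}$ is multiplicative, ${F_{pol}}^p\cdot{F_{pol}}^q\subset{F_{pol}}^{p+q}$.

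For (2), the Euler field \eqref{eq:Euler} preserves every ${F_{pol}}^p$, since it multiplies each monomial by its weight. It therefore descends to each quotient compatibly with restrictions, and so induces a derivation of the limit sheaf. Concretely, $\e(\xi^{\mathbf p}\eta^{\mathbf q})=w(\mathbf p,\mathbf q)\,\xi^{\mathbf p}\eta^{\mathbf q}$, where $w$ is the left-hand side of \eqref{eq:total_weight}.

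For (3), Theorem~\ref{thm:equiv_compl} and Proposition~\ref{prop:graded-completion} show that $\widehat A_r=\widehat B_r=\varprojlim_p B^p_r$. Because ${F_{pol}}^p$ is a graded ideal, $B^p_r$ is the weight-$r$ summand of $A(U)/{F_{pol}}^pA(U)$. The inverse limit of these summands therefore injects into $A_{\mathrm{form}}(U)$, and it consists exactly of the series supported on monomials with $w=r$. Using the monomial formula for $\e$, a series $f$ satisfies $\e f=rf$ if and only if $(w(\mathbf p,\mathbf q)-r)f_{\mathbf p\mathbf q}=0$ for all $\mathbf p,\mathbf q$, that is, if and only if $f$ is supported on weight $r$. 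Both descriptions are compatible with restrictions, so they agree as subsheaves.

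For (4), a weight-$r$ derivation $\cv$ of $\hat A$ is determined by its values on generators: $\cv(x^a)\in\widehat A_r$, $\cv(\xi_i)\in\widehat A_{\alpha_i+r}$ and $\cv(\eta_j)\in\widehat A_{r-\beta_j}$. By (3) these are elements of $A_{\mathrm{form}}$. I would define the extension by
\[
\tilde\cv=\sum_a \cv(x^a)\partial_{x^a}+\sum_i\cv(\xi_i)\partial_{\xi_i}+\sum_j\cv(\eta_j)\partial_{\eta_j}.
\]
This is the step I expect to be the main obstacle: I must check that $\tilde\cv$ is ${F_{pol}}$-continuous, so that the formula makes sense on formal series, and that it agrees with $\cv$ on $\hat A$. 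The difficulty is that the coefficients $\cv(x^a)$ may lie in ${F_{pol}}^0$ only, so $\tilde\cv$ can lower the polynomial degree by at most $1$. This still gives $\tilde\cv({F_{pol}}^p)\subset{F_{pol}}^{p-1}$, which is continuity. Agreement with $\cv$ follows because $\cv$ is continuous on each $\widehat A_s$ (via the equivalent filtrations) and both derivations coincide on the polynomials $A$, which are dense. The identity $[\e,\tilde\cv]=r\tilde\cv$ holds because both sides are derivations and they agree on generators: for instance, $[\e,\tilde\cv](\xi_i)=\e(\cv\xi_i)-\alpha_i\cv\xi_i=r\,\cv\xi_i$. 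Conversely, if a continuous derivation $\mathpzc{w}$ of $A_{\mathrm{form}}$ satisfies $[\e,\mathpzc{w}]=r\mathpzc{w}$, then by (3) it maps the $\e$-eigensheaf of weight $s$ into the one of weight $s+r$. Hence it restricts to a weight-$r$ derivation of $\hat A$, which gives the characterization.
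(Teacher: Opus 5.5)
Your proposal is correct and takes the route the paper indicates. The paper gives no details beyond saying that everything follows from standard facts about formal power series in finitely many homogeneous generators; your chart-wise monomial-weight computation $\e(\xi^{\mathbf p}\eta^{\mathbf q})=w(\mathbf p,\mathbf q)\,\xi^{\mathbf p}\eta^{\mathbf q}$ supplies exactly those details.

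The one step that needs an extra line is the continuity of an arbitrary weight-$r$ derivation $\cv$ of $\hat A$; citing ``the equivalent filtrations'' alone does not give it. It holds because $F^k\hat A_s$ is generated by the finitely many minimal monomials $\xi^{\mathbf p}$ with $\bm{\alpha}\cdot\mathbf p\ge k$, so a weight count gives $\cv(F^k\hat A)\subset F^{k+r}\hat A$, and Theorem~\ref{thm:equiv_compl} then transfers this to ${F_{pol}}$. Together with Hadamard's lemma, which shows that $\cv$ acts on $C^\infty(U)$ as $\sum_a\cv(x^a)\partial_{x^a}$, this justifies your density argument.
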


\begin{rem}\label{rem:formal_homogeneity_philosophy}\normalfont
    Proposition~\ref{prop:graded_inside_formal} illustrates the core principle of graded geometry \cite{grab-hom, GrabowskaGrabowski2024, Voronov2002}: the graded algebra of functions embeds as a subalgebra of functions on a space equipped with an Euler vector field, where homogeneous elements of weight $r$ are the eigenvectors of the Euler vector field with eigenvalue $r$. The ambient space together with the Euler vector field is called a \emph{homogeneity manifold}, and the Euler vector field a \emph{homogeneity structure}. Except when all $V_i$ for $i\neq 0$ are odd, the algebra of functions on a homogeneity manifold is not graded in the usual sense, i.e., it is not the direct sum of its homogeneous components. While this idea typically appears in the smooth category, it applies equally well to the formal (or semiformal, as some directions remain smooth) setting.

\smallskip\noindent
The advantage is that, even though the sheaves of pure-weight homogeneous components exist (in both finite- and infinite-dimensional graded cases, see \cite{KS2024}), their direct sum is merely a presheaf in the non-odd case, as the sheaf gluing axiom fails. Although this is minor (since we work with graded morphisms and derivations, for which the individual homogeneous sheaves suffice), embedding the graded algebra into a "genuine" sheaf of algebras provides a more geometric perspective.
\end{rem}

\section{Graded Borel-Whitney-type theorem}\label{sec:global_Borel}

\noindent In previous sections, we reviewed the local model of $\Z$-graded manifolds. It is built from a $\Z$-graded vector space or, in an arbitrary coordinate chart $U$, from a $\Z^*$-graded trivial vector bundle over $U$ ($\Z^*$ omits $i=0$, as $V_0$ or $U\subset V_0$ serves as the local $0$-weight base). 
Since the algebra of local functions must include polynomials in non-zero weight variables, smooth functions of weight-$0$ variables, and be stable under arbitrary graded changes of coordinates (hence smooth functions of such polynomials), the minimal such algebra consists of formal series in non-zero weight coordinates with coefficients that are smooth functions of weight-$0$ coordinates \cite{KS2024}. The latter (in finite dimensions) follows from Proposition~\ref{prop:graded-series}. 
In Section~\ref{sec:semiformal}, Theorem~\ref{thm:equiv_compl} we proved that for the finite-dimensional case, the two possible filtrations yield isomorphic completions and thus isomorphic local models of $\Z$-graded manifolds.

\smallskip\noindent 
The global $\Z$-manifold is obtained by gluing local models. This was constructed in \cite{KS2024} for manifolds of finite graded dimension (finitely many local coordinates per integer weight). There, local functions of fixed weight were shown to form a sheaf. 
The full algebra of local functions (direct sum of homogeneous components) is only a presheaf, lacking gluing. However, this does not hinder defining the category of $\Z$-graded manifolds, as we primarily use sheaves of fixed-weight functions. 
Sheafification of the full algebra has a geometric solution in finite dimensions (Section~\ref{sec:formal_homogeneity}, especially Proposition~\ref{prop:graded_inside_formal} and Remark~\ref{rem:formal_homogeneity_philosophy}): every finite-dimensional $\Z$-graded manifold is a \emph{semiformal} homogeneity structure (in Kotov--Salnikov terminology\footnote{Semiformal means only part of the ``directions'' are formal, i.e.\ local functions are formal series only in a part of coordinates.}). Fixed-weight functions embed as eigenspaces of the Euler vector field. Functions on such semiformal manifolds form a true sheaf over the base, with the Euler field encoding the grading.

\smallskip\noindent In \cite{KS2024}, a \emph{Batchelor--Gawedzki-type theorem} establishes that every smooth $\Z$-graded manifold of finite graded dimension is non-canonically isomorphic to a canonical $\Z$-graded vector bundle over the same base, viewed as a semiformal $\Z$-graded manifold. Its structure sheaf arises from completing the sheaf of fiberwise polynomial functions. In finite dimensions, this means any smooth $\Z$-graded manifold is isomorphic to the formal neighborhood of the zero section in a smooth $\Z$-graded vector bundle, equipped with its (fiberwise) Euler vector field.

\smallskip\noindent The Batchelor--Gawedzki theorem admits a category-theoretic interpretation. Consider the category $\mathsf{B}_\Z$, whose objects are smooth finite-dimensional $\Z^*$-graded vector bundles (all fiber coordinates have non-zero weights) and morphisms are smooth maps preserving the Euler vector field (\emph{homogeneity maps}). These are not generally vector bundle morphisms, though they cover base maps, as the zero section is the zero locus of the Euler field.
With each such bundle, associate the formal neighborhood of its zero section, carrying the induced homogeneity structure (a $\Z$-graded manifold). This correspondence is functorial: homogeneity maps yield morphisms of $\Z$-graded manifolds. The theorem implies this functor is surjective on objects.

\smallskip\noindent
The next question is whether it is full. In other words: given a morphism of $\Z$-graded manifolds, does there exist a homogeneity map between the corresponding Batchelor--Gawedzki bundles inducing it? In this section, we answer this question affirmatively.

\smallskip\noindent Let $E \to M$ be a $\Z$-graded vector bundle over a smooth manifold $M$. Denote by $\wE$ the corresponding $\Z$-graded manifold, obtained as a neighborhood of the zero section. By construction, $\wE$ is a vector bundle with formal $\Z$-graded fibers (possibly including a homogeneous subbundle of weight $0$).

\begin{theorem}[Graded Borel-Whitney-type theorem]\label{thm:Borel-Whitney}\normalfont
    Let $E\to M$ and $E'\to M'$ be finite-dimensional $\Z^*$-graded vector bundles. For any morphism $\phi\colon \wE'\to \wE$ of $\Z$-graded manifolds, there exists a smooth homogeneity map $\widetilde{\phi}\colon E'\to E$ inducing $\phi$.
\end{theorem}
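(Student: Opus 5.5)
Our plan is to reduce the statement to the graded Borel lemma (Appendix~\ref{sec:local_Borel}, and its global version in \cite{KS2024}): every weight-$k$ semiformal function on a Batchelor--Gawedzki bundle extends to a smooth function of the same weight. The extension must be chosen so that the resulting components glue into a single smooth map $E'\to E$.

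First we record what a morphism $\phi\colon \wE'\to\wE$ is. It covers a smooth base map $\phi_0\colon M'\to M$. Pulling back along $\phi_0$, a homogeneity map $E'\to E$ covering $\phi_0$ is the same as a homogeneity map $E'\to \phi_0^*E$ over $M'$. Such a map is a section of the pullback bundle $\pi'^*\phi_0^*E\to E'$ that has weight $k$ on each component $\phi_0^*E_k$. The fiber coordinate $y^a$ of weight $k$ pulls back to a weight-$k$ function, so the condition is $\e'(s_k)=k\,s_k$ for the Euler field $\e'$ of $E'$. Dually, $\phi$ is determined by $\phi_0$ together with $\phi^*$ on the weight-$k$ linear fiber functions $\Gamma(E_k^*)$. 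By $C^\infty$-linearity, this is a formal weight-$k$ section $\hat s_k$ of $\pi'^*\phi_0^*E_k$ over $\wE'$. The finitely many $k$ involved reflect the finite dimensionality.

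The key step is to find smooth sections $s_k$ of $\pi'^*\phi_0^*E_k$ over $E'$ with $\e'(s_k)=k s_k$ whose normal jets along $M'$ equal $\hat s_k$. Choose local trivializations of $\phi_0^*E_k$ over a cover $\{U_\alpha\}$ of $M'$. In each chart, the components of $\hat s_k$ are scalar semiformal functions of weight $k$. The graded Borel lemma gives smooth weight-$k$ extensions $s_k^\alpha$ on $E'|_{U_\alpha}$.

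To glue, take a partition of unity $\rho_\alpha$ on $M'$ and pull it back to $E'$. The functions $\rho_\alpha\circ\pi'$ have weight $0$, because $\e'$ is vertical and kills basic functions. Set $s_k=\sum_\alpha (\rho_\alpha\circ\pi')\,s_k^\alpha$. Since the transition functions are basic, this is a well-defined smooth section of weight $k$ with jet $\hat s_k$. Alternatively, one can invoke the global graded Borel theorem directly, applied to the bundle-valued case by embedding $\phi_0^*E_k$ into a trivial bundle.

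We then define $\widetilde\phi(e')=(\phi_0(\pi'(e')),\,(s_k(e'))_k)\in E$. This map is smooth. It satisfies $\widetilde\phi_*\e'=\e\circ\widetilde\phi$: checking against the generating functions, the base coordinates are killed on both sides, and the fiber coordinates of weight $k$ pull back to weight-$k$ functions. So $\widetilde\phi$ is a homogeneity map. The jet of $\widetilde\phi^*f$ along $M'$ agrees with $\phi^*f$ for every weight-$k$ function $f$ on $\wE$. This holds for base functions and for linear fiber functions by construction. It extends to all of $\widehat A$ because both sides are continuous algebra morphisms: pullback by a smooth map commutes with the Taylor expansion of composite functions, by the chain rule and Faà di Bruno formula. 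The pullback also respects the filtrations, since $\widetilde\phi$ maps the zero section to the zero section. Hence $\widetilde\phi$ induces $\phi$.

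The main obstacle is the extension step. The graded Borel lemma must be applied uniformly, and we must check that the partition-of-unity gluing preserves both the weight condition and the prescribed jet. This works only because the cutoffs are pulled back from the base and therefore have weight $0$; a cutoff in the fiber directions would destroy homogeneity. A second subtle point is the last step, where we must justify that a morphism of semiformal graded manifolds is determined by its values on the finitely many local generators (coordinates), so that matching on generators suffices. For this we would cite the coordinate description of morphisms from \cite{KS2024}, together with the continuity and unique-extension result of Theorem~\ref{app:thm_cofinal}.
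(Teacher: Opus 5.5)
Your construction only ever produces bundle maps over $\phi_0$, and the general case fails there. You assume that $\phi$ is determined by $\phi_0$ together with $\phi^*$ on $\Gamma(E_k^*)$, that these data form a section of $\pi'^*\phi_0^*E_k$, and that the lift has the form $\widetilde\phi(e')=(\phi_0(\pi'(e')),(s_k(e'))_k)$.

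For $f\in C^\infty(M)$, however, one only has $\phi^*f=\phi_0^*f+(\phi^*f)_{\cI}$. The nilpotent weight-$0$ part is in general nonzero, because $\wE'$ carries nilpotent weight-$0$ functions: formal series in products of coordinates of opposite weights. A concrete counterexample:
\begin{itemize}
\item Let $M'$ be a point and $E'=\mathbb{R}^2$ with even coordinates $\xi,\eta$ of weights $2,-2$.
\item Let $E=\mathbb{R}\times W\to M=\mathbb{R}$ be trivial, with $W$ any $\Z^*$-graded space.
\item Set $\phi^*x=\xi\eta$ and $\phi^*(\text{fiber coordinates})=0$.
\end{itemize}
Then $\phi_0$ sends the point to $0$, and your $\widetilde\phi$ has $\widetilde\phi^*x=0$, so it does not induce $\phi$. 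By contrast, $(\xi,\eta)\mapsto(\xi\eta,0)$ is a homogeneity lift that is not a bundle map.

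For the same reason, $\phi^*|_{\Gamma(E_k^*)}$ is linear over $\phi^*|_{C^\infty(M)}$, not over $\phi_0^*$. So it is not a section of $\pi'^*\phi_0^*E_k$, and your final check ``this holds for base functions by construction'' is false. What you have proved is the special case where $\phi$ is a bundle map, which the paper treats separately.

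For a flat target you could repair this by Borel-extending each $\phi^*x_i$ as a weight-$0$ function. For non-flat $M$, though, the base part is $M$-valued and the fiber part lives over it, so you need a globalization device. That is the content of Lemma~\ref{lem:formal_geodesic}, which works in two steps.
\begin{itemize}
\item Write $\phi=\wpr_E\circ\psi$, where $\psi=\phi_0\times\phi$ is a bundle map $\wE'\to M^{(\infty)}_{M\times E}$ over $\phi_0$.
\item Identify $M^{(\infty)}_{M\times E}\cong\wF$, where $F=TM\times_M E$ has $TM$ in weight $0$. This uses the exponential map of a complete Riemannian metric and parallel transport for a weight-preserving connection.
\end{itemize}
This identification turns the nilpotent part $u_i=(\phi^*x_i)_{\cI}$ into a weight-$0$ fiber component. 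Only then does your Borel-plus-partition-of-unity argument apply, now to $\phi_0^*F$; the graded Borel lemma covers weight $0$ as well. You obtain $\widetilde\phi$ by composing with $\Exp\colon F\to M\times E$ and the projection to $E$.

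Completeness of the metric is essential: $\Exp$ must be defined on all of $F$. As you observe yourself, cutting off in fiber directions would destroy homogeneity.
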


\noindent {\bf Proof (of Graded Borel-Whitney-type theorem).}

\smallskip\noindent For local $\Z$-graded manifolds, the answer follows immediately from the graded Borel lemma~\cite{KS2024} (see also Theorem~\ref{cor:gr-borel} in Appendix~\ref{sec:local_Borel}). 
Let $\cM$ and $\cM'$ be $\Z$-graded manifolds with homogeneous coordinates 
\[
(x, \theta) = (x_1, \ldots, x_{n_1}, \theta_1, \ldots, \theta_{n_2}), 
\quad
(z, \zeta) = (z_1, \ldots, z_{m_1}, \zeta_1, \ldots, \zeta_{m_2}),
\]
where all $x_i$ and $z_j$ have weight $0$, while each $\theta_a$ and $\zeta_b$ has a nonzero weight. 
Assume that the weights of the coordinates $\theta_a$ are denoted by $k_a$ for $a = 1, \ldots, n_2$.

\smallskip\noindent
A morphism $\phi \colon \cM' \to \cM$ is then determined by $n_1$ homogeneous functions $g_i$ of weight $0$ 
and $n_2$ homogeneous functions $h_a$ of weights $k_a$, respectively, which are smooth in the zero-weight variables and formal power series in the nonzero-weight ones:
\[
\phi^*(x_i) = g_i(z, \zeta), 
\quad 
\phi^*(\theta_a) = h_a(z, \zeta),
\qquad
i = 1, \ldots, n_1,
\quad
a = 1, \ldots, n_2.
\]
By Theorem~\ref{cor:gr-borel}, each function $g_i$ and $h_a$ admits a smooth lift of the same weight satisfying the required lifting property.

\smallskip\noindent The situation becomes only slightly more involved if we replace the flat source manifold with a non-flat one with a paracompact base $M'$, while keeping the target space flat. Indeed, in the same homogeneous coordinates $(x_i,\theta_a)$ on the target, the morphism $\phi$ is still given by functions $g_i$ and $h_a$ of appropriate weights. One applies Theorem~\ref{cor:gr-borel} on an open cover of $\cM' = \wE'$ trivializing $E' \to M'$ and glues the extensions together using a partition of unity subordinate to this cover\footnote{For a countable family of vector bundles over a paracompact smooth manifold, there exists a single open cover on which all bundles trivialize simultaneously, together with a partition of unity subordinate to this cover.}, yielding a global map $E' \to E$.

\smallskip\noindent If the target $\Z$-graded manifold $\wE$ is non-flat (meaning its base $M$ is non-flat), but $\phi\colon \wE' \to \wE$ is a bundle map (although not necessarily a vector bundle map), we can still apply the local theorem and gluing construction as before. We choose an open cover of $M$ and $M'$ that simultaneously trivializes both $E'$ and $E$.

\smallskip\noindent The general situation is more subtle.
To be on the safe side, recall that any morphism of $\Z^*$-graded manifolds induces a canonical base map via the canonical inclusions of the bases into the graded manifolds; every such morphism preserves these inclusions. Any morphism of $\Z$-graded manifolds is uniquely determined by its pullback $\phi^*$ on $\Z$-graded functions on the target space.
 Since the target is a vector bundle over $M$, it suffices to specify $\phi^*$ on smooth functions on $M$ and smooth sections of $E^*$ of fixed integer weight. 

 \smallskip\noindent
Taking into account the bundle structure of $\wE'$, we identify functions on $M$ with their pullbacks by the projection $\wE' \to M'$. This decomposes the space of functions on $\wE'$ into the direct sum of the subalgebra $C^\infty(M')$ and the ideal $\cI(\wE')$ of functions vanishing on the zero section (the latter consists of nilpotent elements with respect to the filtration ${F_{pol}}$; see Section~\ref{sec:semiformal}). Thus, for any function $f$ on $\wE$, one has
\begin{equation}\label{eq:base_plus_nilpotent}
  \phi^* f = \phi_0^* (f|_M) + (\phi^* f)_{\cI},
\end{equation}
where $f|_M$ denotes the restriction of $f$ to the zero section and $(\cdots)_{\cI}$ is the projection onto $\cI(\wE')$. In particular, if $f$ is induced by a homogeneous-weight section of $\wE^*$, then the first term in the decomposition \eqref{eq:base_plus_nilpotent} vanishes, since a morphism preserves the inclusion of bases (and thus the property of vanishing on them). On the other hand, for any base function $f$ on $M$, the nilpotent component of $\phi^*f$ is nonzero, unless $\phi$ is a bundle map.  

\smallskip\noindent 
To convey the idea, assume for the moment the existence of global coordinates $(x_i, \theta_a)$ on $\wE$, where all $x_i$, $i=1,\ldots,n_1$, have zero weight and all $\theta_a$, $a=1,\ldots,n_2$, have nonzero integer weights.
 Then $\phi^*\theta_a \in \cI(\wE')$, while
\[
\phi^* x^i = \phi_0^* x^i + u_i, \quad u_i = (\phi^* x_i)_{\cI}.
\]
One can represent the action of $\phi^*$ on functions $f(x,\theta)$ as the composition of the infinite jet prolongation $f \mapsto j^\infty(f)$, written in terms of its generating formal power series
\[
j^\infty(f)(x,\vx,\theta) = \sum_{(i_1,\ldots,i_{n_1})} {\pt_{x_1}^{i_1}} \cdots {\pt_{x_{n_1}}^{i_{n_1}}} f(x,\theta) \frac{\vx_1^{i_1}}{i_1!} \cdots \frac{\vx_{n_1}^{i_{n_1}}}{i_{n_1}!} ,
\]
viewed as a smooth function of $(x_i)$ with values in formal power series in $(\vx_i,\theta_a)$, and the morphism sending
\[
x_i \mapsto \phi_0^* x^i \in C^\infty(M'),\quad \vx_i \mapsto u_i \in \cI(\wE'),\quad \theta_a \mapsto \phi^*\theta_a
\]
for all $i=1,\ldots,n_1$ and $a=1,\ldots,n_2$. In this way, we reduce the general case to a morphism of formal bundles, though it still relies on target coordinates which may not exist globally.

\smallskip\noindent
The regular proof requires more refined techniques. We first prove the following lemma, then show how it completes Theorem~\ref{thm:Borel-Whitney}. As established, the notation $\hat{(\cdot)}$ for a (graded) vector bundle denotes the formal neighborhood of the zero section.


\begin{lemma}\label{lem:formal_geodesic}\normalfont
$\Z$-graded morphisms $\wE'\to\wE$ with the base map $\phi_0$ are in (noncanonical) one-to-one correspondence with homogeneity formal bundle maps $\wE'\to\wF$ over $\phi_0$, where $F=TM\times_M V$ is viewed as a $\Z$-graded vector bundle over $M$ with $TM$ being the homogeneous component of weight $0$.
\end{lemma}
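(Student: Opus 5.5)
The plan is to globalize the coordinate computation preceding the lemma. There, $\phi^*$ was written as the infinite jet prolongation $j^\infty$ followed by the substitution $x_i\mapsto\phi_0^*x_i$, $\vx_i\mapsto u_i$, $\theta_a\mapsto\phi^*\theta_a$. I would replace the coordinate increments $\vx$ by tangent vectors, using a formal exponential map built from auxiliary geometric data. This data is the source of the non-canonicity in the statement.

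The construction has four steps.

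\textbf{Step 1.} Fix a torsion-free affine connection $\nabla^M$ on $TM$. Fix also a linear connection $\nabla^E$ on $E$ that preserves each homogeneous component $E_i$; such connections exist by a partition of unity, since $E=\bigoplus_i E_i$. Here $V$ in the statement is $E$ regarded as a graded bundle over $M$, so $F=TM\oplus E$ with $TM$ in weight $0$.

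\textbf{Step 2.} Define the smooth map
\[
\Exp\colon F\supset\mathcal U\to M\times E,\qquad (x,v,e)\mapsto \bigl(x,\ \exp_x v,\ P_{x\to \exp_x v}\,e\bigr),
\]
where $P$ is $\nabla^E$-parallel transport along the geodesic $t\mapsto\exp_x(tv)$. Since $P$ is linear and weight-preserving, and $v$ has weight $0$, this map intertwines the fiberwise Euler fields on $F$ and on $\mathrm{pr}_2^*E$. It is therefore a homogeneity map. It is a diffeomorphism from a neighborhood of the zero section of $TM$ onto a neighborhood of the diagonal in $M\times M$, covered by a weight-preserving vector bundle isomorphism $F|_{\mathcal U}\cong \mathrm{pr}_2^*E$.

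\textbf{Step 3.} Pass to formal neighborhoods: the formal neighborhood of $0_M$ in $F$ (that is, $\wF$) and the formal neighborhood of $\Delta_M$ in $M\times E$ along the zero section. Step 2 then yields an isomorphism of semiformal homogeneity structures
\[
\widehat{\Exp}\colon \wF\xrightarrow{\ \sim\ }\widehat{(M\times E)}_{\Delta}.
\]

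\textbf{Step 4.} A $\Z$-graded morphism $\phi\colon\wE'\to\wE$ with base map $\phi_0$ is equivalent to the graded morphism $(\phi_0\circ\pi',\phi)\colon\wE'\to M\times\wE$. This morphism restricts on the zero section $M'$ to $m'\mapsto(\phi_0(m'),\phi_0(m'))$, which lies in $\Delta_M$. Hence it factors through $\widehat{(M\times E)}_{\Delta}$. Composing with $\widehat{\Exp}^{-1}$ gives a map $\psi\colon\wE'\to\wF$ that covers $\phi_0$ in the first factor. In the first factor the map is $\phi_0\circ\pi'$, a smooth function of the base of $E'$, so $\psi$ is a formal bundle map over $\phi_0$.

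For the inverse direction, given a formal homogeneity bundle map $\psi$ over $\phi_0$, set $\phi=\mathrm{pr}_2\circ\widehat{\Exp}\circ\psi$. Both compositions are identities because $\widehat{\Exp}$ is an isomorphism and the first component is forced to be $\phi_0\circ\pi'$. In local coordinates this reproduces the formula above the lemma: the $TM$-component of $\psi$ plays the role of $\vx\mapsto u$, now measured by the geodesic logarithm instead of coordinate differences.

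The main obstacle I expect is making rigorous sense of composing a morphism whose base components $\phi^*x_i=\phi_0^*x_i+u_i$ carry nilpotent parts $u_i\in\cI(\wE')$ with the smooth map $\Exp^{-1}$. This is exactly where the factorization through the formal neighborhood of $\Delta$ is needed. My approach is to check convergence locally, by Taylor expansion in the $u_i$, which is legitimate because the $u_i$ lie in $\cI$. Each term then respects the ${F_{pol}}$-filtration. By Theorem~\ref{thm:equiv_compl}, in finite dimension the ${F_{pol}}$-filtration is equivalent to the weight filtration on each homogeneous component, so the resulting series converge componentwise in $\widehat{A}$. Finally, the local constructions agree on overlaps because $\Exp$ is globally defined, so no gluing ambiguity arises.
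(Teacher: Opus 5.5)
Your proposal is correct and follows essentially the paper's proof. In both, $\phi$ corresponds bijectively to $\psi=(\phi_0\circ\pi',\phi)$, which factors through the formal neighborhood of the diagonal in $M\times E$, and that neighborhood is identified with $\wF$ via the exponential map twisted by weight-preserving parallel transport, $\widehat{\Exp}$. The one difference is that the paper takes a complete Riemannian metric so that $\Exp$ is defined on all of $F$; it later needs this when it composes the smooth lift $E'\to F$ with $\Exp$ in Theorem~\ref{thm:Borel-Whitney}, whereas for the lemma itself your neighborhood $\mathcal U$ of the zero section suffices.
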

\noindent
\begin{proof} In the proof of the lemma, we follow the approach of \cite{Bonavolonta:1304.0394}. The next step is to formulate an intrinsic version of the previous construction, expressing the pullback map $\phi^*$ for a morphism of $\mathbb{Z}$-graded manifolds as the composition of the infinite jet prolongation and the pullback along a homogeneity morphism of formal graded bundles.

\smallskip\noindent Consider the Cartesian product $M \times M$ together with the diagonal embedding 
\[\Delta: M \hookrightarrow M \times M.\] 
Denote by $M^{(\infty)} = M^{(\infty)}_{M\times M}$ the formal neighborhood of the diagonal in $M\times M$. Let $\p_1, \p_2$ be the projections of $M \times M$ onto the first and second factors, and $\wpr_1, \wpr_2$ their restrictions onto $M^{(\infty)} \subset M \times M$.

\smallskip\noindent
It is well known\footnote{The relation between jets and formal neighborhoods of the diagonal originates from algebraic geometry and differential geometry literature, prominently featured in Grothendieck's \emph{EGA} (\emph{\'{E}l\'ements de G\'eom\'etrie Alg\'ebrique}) \cite{grothendieck_ega} and developed by Malgrange for systems of PDEs \cite{malgrange_1962} (see also \cite{kock1980}).} that for any vector bundle $E\to M$, $\wpr_2^*E$ identifies with the bundle (along $\wpr_1$) of infinite jets of sections of $E$, and the pullback map $\wpr_2^*$ sends any section of $E$ to its infinite jet prolongation. In particular, $\wpr_2^*$ induces the infinite jet prolongation on smooth functions on $M$.

\smallskip\noindent Analogously, take the Cartesian product $M \times E$ with projections $\p_M\colon M \times E \to M$ and $\p_E\colon M \times E \to E$. Consider the embedding $M \hookrightarrow M \times E$ via the identity on $M$ and zero section of $E$, and its formal neighborhood $M^{(\infty)}_{M \times E} \subset M \times E$ with restricted projections $\wpr_M, \wpr_E$.
For the better visualization of the constructed morphisms and relations between them, one can use the following picture, where the implication arrow between the diagrams means that the right one is obtained from the left one by taking the formal neighborhood of $M$ embedded correspondingly:

\[
\xymatrix@R=1em@C=1.5em{
  & M\times E \ar[ddl]_{\p_M}\ar[dd] \ar[r]^-{\p_E} & E\ar[dd] &&   
  && M^{(\infty)}_{M \times E} \ar[ddl]_{\wpr_M}\ar[dd] \ar[r]^-{\wpr_E} & \wE\ar[dd]\\
  && &\ar@{=>}[r]& 
  &&& \\
  M & M\times M \ar[l]^-{\p_1} \ar[r]_-{\p_2}& M  &&
  &M & M^{(\infty)} \ar[l]^-{\wpr_1} \ar[r]_-{\wpr_2}& M
  } 
\]

\smallskip\noindent 
Consider the map
\[
\psi\colon \wE' \xrightarrow{\phi_0 \times \phi} M \times \wE.
\]
Here, for simplicity, we denote the composition of the projection $\wE' \to M'$ with the base map $\phi_0$ by the same letter $\phi_0$.

\smallskip\noindent Thanks to the embeddings $M \hookrightarrow M \times M \hookrightarrow M \times E$ and noting that $\wE = M^{(\infty)}_E$, which implies $M \times \wE = (M \times M)^{(\infty)}_{M \times E}$, we obtain an inclusion of formal neighborhoods 
\[
M^{(\infty)}_{M \times E} \hookrightarrow M \times \wE.
\]
It is not difficult to show (cf.~the super case in \cite{Bonavolonta:1304.0394}, whose arguments adapt directly) that $\psi$ takes values in $M^{(\infty)}_{M \times E}$. This induces a bundle morphism over $\phi_0$:
\[
\xymatrix@R=1em@C=1.5em{
 && \wE    && \\
 \wE'\ar[dr] \ar[urr]^-\phi \ar[rrrr]^-\psi
 &&&& M^{(\infty)}_{M \times E} \ar[ull]_-{\wpr_E} \ar[dl]  \\
 & M' \ar[rr]^-{\phi_0} && M &
 }
\]

\noindent Furthermore, any morphism $\phi\colon \wE' \to \wE$ corresponds bijectively to a bundle morphism $\psi\colon \wE' \to M^{(\infty)}_{M \times E}$ as above. The map $\phi \mapsto \psi$ was defined earlier; conversely, $\phi = \wpr_E \circ \psi$.

\smallskip\noindent
We now apply the exponential map technique to vector bundles over Riemannian manifolds. Every smooth manifold (compact or not) admits a \emph{complete Riemannian metric} \cite{nomizu_ozeki}.
On a complete Riemannian manifold $(M,\cg)$, geodesics with arbitrary initial velocity $\cv_x$ at $x\in M$ exist for all time. Thus the \emph{geodesic exponential map} \(\exp_{\cg}\colon T_xM\to M\) at every point $x\in M$ is defined on the entire tangent space $T_xM$ (\emph{Hopf-Rinow theorem}) \cite{hopf_rinow}. The geodesic map 
\[\exp_\cg\colon TM\to M\times M,\quad (x,\cv_x)\mapsto(x,\exp_{\cg}(x,\cv_x)),\] 
is a local diffeomorphism.
It does not yield a global diffeomorphism $TM\cong M\times M$ in general.
However, it identifies the formal neighborhood of the zero section in $TM$ with that of the diagonal in $M\times M$:
\[
\widehat{\exp_\cg}\colon \widehat{TM}\simeq M^{(\infty)}
\]
Choose a weight-preserving vector bundle connection on $E$ (a collection of connections on each $E_i$). Denote by $P^\nabla_{\gamma(t)}$ the parallel transport along the curve $[0,1]\mapsto \gamma(t)\in M$, with $\gamma(0)=x$ and $\gamma(1)=y$:
\[
P^\nabla_{\gamma(t)}\colon E_x\simeq E_y.
\]
Parallel transport along geodesics gives a linear isomorphism for all $(x,\cv_x)\in T_xM$,
\[
P^\nabla_{\exp(x,t\cv_x)}\colon E_x\simeq E_y, \quad y=\exp(x,\cv_x),
\]
or a local bundle isomorphism $F=TM\times_M E\to M\times E$ over $\exp_\cg\colon TM\to M\times M$; we denote this by $\Exp$.
Restricting to the formal neighborhood of the zero section in $F$ gives an isomorphism 
$\wF\simeq  M^{(\infty)}_{M \times E}$ over $\widehat{\exp_\cg}\colon \widehat{TM}\to M^{(\infty)}$ and therefore an isomorhism of $\Z-$graded  formal bundles over $M$. For a clearer illustration of the constructed morphisms and their interrelations, one can employ a composite diagram once more, where the implication arrow between the diagrams again signifies that the right one derives from the left via the formal neighborhood of $M$ embedded accordingly:

\[
\xymatrix@R=1em@C=1.5em{
  F \ar[rr]^-\Exp \ar[dd] \ar[dr] && M\times E \ar[dd]  \ar[dl]_-{\p_M} 
  && 
  &\wF \ar[rr]^-{\widehat{\Exp}} \ar[dd] \ar[dr] && M^{(\infty)}_{M \times E} \ar[dd]
  \ar[dl]_-{\wpr_M}\\
  & M & &\ar@{=>}[r]& 
  & & M\\
  TM \ar[rr]_-{\exp_\cg} \ar[ur]   & & M \times M \ar[ul]_-{\p_1}
  &&
  & \widehat{TM} \ar[rr]_{\widehat{\exp_\cg}}\ar[ur]     && M^{(\infty)}
  \ar[ul]_-{\wpr_1}
  } 
\]

\smallskip\noindent
Combined with the prior correspondence between morphisms $\wE'\to\wE$ and homogeneity bundle maps $\wE'\to M^{(\infty)}_{M \times E}$, this completes the proof of lemma.
    $\square$
\end{proof}

\medskip
\noindent Now we apply Lemma~\ref{lem:formal_geodesic} to finalize  the proof of Theorem~\ref{thm:Borel-Whitney}.
Choose compatible open covers of $M$ and $M'$ that trivialize $\phi_0^*F$ over $M'$. By the local graded Borel theorem~\ref{cor:gr-borel} and using a partition of unity on $M'$, we construct a homogeneity bundle map $E'\to F$ extending $\phi$. Composing this map with $\Exp\colon F\to M\times E=\p_2^*E$ and the projection $\p_2^*E\to E$ over $\p_2$ then yields the desired result.
    $\square$

\section{Beyond the Present Scope: Homogeneity structures}\label{sec:beyond}

\noindent In the previous sections of the article, we considered smooth graded vector bundles equipped with a vertical Euler vector field that defines the grading as an example of homogeneity manifolds. The latter are defined as smooth supermanifolds with a vector field which, in a neighborhood of the zero locus, takes the form of an Euler vector field in special homogeneous coordinates. Homogeneity manifolds, together with homogeneity (Euler-field-preserving) maps, form a category, which we denote by $\mathsf{HMan}$.
 There are other substantial classes of examples, which we present below.

\begin{example}[Homogeneity Lie groups \cite{KS-HCP}]\label{ex:homogeneity_group}\normalfont
    A $\mathbb{Z}$-graded Lie supergroup is a group object in the category 
$\mathsf{HMan}$; that is, a Lie supergroup endowed with a multiplicative 
homogeneity structure. Such a group realizes the third Lie theorem 
for $\mathbb{Z}$-graded Lie algebras. 
Differentiation at the identity of the multiplicative Euler vector field 
induces a compatible grading on the associated Lie (super)algebra 
$\mathfrak{g}$. Conversely, any compatible $\mathbb{Z}$-grading on 
$\mathfrak{g}$ integrates to a simply connected Lie supergroup 
carrying a multiplicative Euler field. 
The zero locus of the homogeneity structure on $G$ is the Lie subsupergroup $G_0$, whose Lie algebra $\mathfrak{g}_0 \subset \mathfrak{g}$ consists of 
elements of degree $0$. The exponential map preserves the homogeneity 
structure, and the Euler vector field on $G$ is linearizable in a neighborhood of $G_0$, with weights coinciding with those of $\mathfrak{g}$. The formal neighborhood of $G_0$, regarded as a group object in the category 
$\mathsf{Man}_{\mathbb{Z}}$, can be constructed by the method of Harish-Chandra pairs 
applied to the pair $(G_0, \mathfrak{g})$. 

\smallskip\noindent
An example of a homogeneity Lie group is $G = SL(2, \mathbb{R})$ equipped with 
the multiplicative Euler vector field described below. Let $h, e, f$ be the standard 
basis of $\mathfrak{g} = \mathfrak{sl}(2, \mathbb{R})$, satisfying 
$[h, e] = 2e$, $[h, f] = -2f$, and $[e, f] = h$. The adjoint action of $h$ on 
$\mathfrak{g}$ induces a $\mathbb{Z}$-grading with weights $(-2, 0, 2)$. 
The corresponding multiplicative vector field on $G$ is given by 
\(\epsilon_g = hg - gh\) for all \(g \in G\). 
The zero locus \(G_0\) of \(\epsilon\) consists of diagonal matrices 
with determinant \(1\). Although \(G\) itself is connected, \(G_0\) is disconnected, 
having two components containing \(Id\) and \(-Id\). The weights of \(\epsilon\) 
in a neighborhood of \(G_0\) coincide with those of the grading on 
\(\mathfrak{g}\), namely \( (-2, 0, 2)\).
\end{example}

\begin{example}[Projective spaces]\normalfont
  On $\mathbb{P}^1$ with homogeneous coordinates $[x_0:x_1]$, the vector field $\e = x_1 \partial_{x_1} - x_0 \partial_{x_0}$ defines a homogeneity structure, vanishing at the ``zero'' point ($x_1=0$) and at ``infinity'' ($x_0=0$). Locally, in the chart $z = x_1/x_0$ it becomes $\e = 2z \partial_z$, while in $w = x_0/x_1$ we obtain $\e = -2w \partial_w$. This construction extends analogously to higher-dimensional projective spaces $\mathbb{P}^n$. This example is closely related to Example~\ref{ex:homogeneity_group}, 
as it can be regarded as a homogeneous space of a homogeneity Lie group. 
The zero locus of the associated Euler vector field is disconnected 
and consists of two points. Although the zero locus of a multiplicative Euler field 
on a Lie supergroup can also be disconnected, this example exhibits 
a new phenomenon: the Euler field on the projective space has different 
weights in the neighborhoods of the distinct connected components.
\end{example} 

\smallskip\noindent Homogeneity structures provide a relatively new framework for studying gradings on 
manifolds, and several open questions remain whose answers may lead to interesting 
mathematical results. One such question, related to the dynamics of Euler vector fields, 
asks under what conditions a homogeneity structure $(M, \epsilon)$ is 
diffeomorphic, via a homogeneity diffeomorphism, to a graded vector bundle 
over the zero locus of $\epsilon$. This problem is probably connected to the study of homogeneous Riemannian metrics and affine connections on $M$. 

\smallskip\noindent Another question related to the topic of this paper concerns the development of an appropriate differentiable calculus on a $\mathbb{Z}$-graded vector bundle with a countable number of homogeneous finite-rank components. As is known from the Batchelor-Gawedzki-type theorem, every $\mathbb{Z}$-graded manifold of finite graded dimension is isomorphic to the $\mathbb{Z}$-graded manifold associated with such a graded vector bundle. For a "good" differentiable calculus, the graded Borel theorem (on the extension of any homogeneous semiformal function to a smooth function of the same weight) should hold.

\smallskip\noindent
It would also be interesting to investigate a Borel-Whitney-type theorem on the extension of morphisms for more general homogeneity structures, possibly with many connected components in the zero locus of the homogeneity structure (i.e., the Euler vector field).



\appendix

\section{Filtrations and completions 
}\label{sec:app}

\setcounter{equation}{0}
\renewcommand{\theequation}{\thesection.\arabic{equation}}

\noindent \noindent All \emph{material} in this section, except Definition~\ref{app:def_graded_filtered} and Proposition~\ref{prop:graded-completion}, 
is standard; see e.g. \cite{may-filtrations, stacks-project}.

\begin{deff}[Projective limit]\label{app:deff_projlim}
A \emph{projective system} of groups (rings, modules, etc.) indexed by a poset $I$ consists of 
objects $A^i$ ($i\in I$) with transition maps $\pi_{ij}\colon A^j\to A^i$ ($i\le j$) satisfying
\begin{equation}\label{app:eq_inverse_compatibility}
   \xymatrix{
A^k \ar[dr]_{\pi_{ik}} \ar[r]^{\pi_{jk}} & A^j \ar[d]^{\pi_{ij}} \\
& A^i
} 
\end{equation}
for $i\le j\le k$. The \emph{projective limit} is $\hat{A} = \varprojlim A^i$ with projections 
$\pi_i\colon\hat{A}\to A^i$ compatible with the $\pi_{ij}$.
\end{deff}

\noindent The explicit formula for the projective limit is
\begin{equation}\label{app:eq_proj_limit}
  \hat{A} = \bigl\{ \vec{a}\in\prod_{i\in I} A^i \bigm| \pi_{ij}(a_j)=a_i \ (i\le j) \bigr\}. 
\end{equation}

\begin{deff}\label{def:cofinal}\normalfont
A subset $J \subset I$ is called \emph{cofinal} in the poset $I$ if for every $i \in I$, there exists $j \in J$ such that $j \geq i$.
A monotonically increasing map $f \colon K \to I$ between posets is called \emph{cofinal} if its image $f(K)$ is cofinal in $I$.
\end{deff}

\begin{theorem}[Cofinal subsystem]\label{app:thm_cofinal}\normalfont{\mbox{}\vskip 2mm}
\begin{enumerate}
    \item Let $(A^i,\pi_{ij})_{i\in I}$ be a projective system over a poset $I$, and 
$J$ be \emph{cofinal} in $I$. 
Then \(
\varprojlim_{i\in I} A^i = \varprojlim_{j\in J} A^j
\).
\item Compatible morphisms $\phi_k\colon A^k\to B^{f(k)}$ over cofinal $f\colon K\to I$ extend uniquely to 
$\hat{\phi}\colon\hat{A}\to\hat{B}$ respecting projections.
\end{enumerate}
\end{theorem}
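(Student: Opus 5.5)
Part (1) is proved by writing down mutually inverse maps between the two limits, using formula \eqref{app:eq_proj_limit}. Let $\rho\colon \varprojlim_{i\in I} A^i \to \varprojlim_{j\in J} A^j$ be restriction of indices: a compatible family $\vec a=(a_i)_{i\in I}$ goes to $(a_j)_{j\in J}$. This is a homomorphism and is clearly compatible with the projections. For the inverse, take a compatible family $(b_j)_{j\in J}$ and an index $i\in I$. By cofinality choose $j\in J$ with $j\ge i$, and set $a_i=\pi_{ij}(b_j)$.

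The key check is that $a_i$ does not depend on $j$. Suppose $j,j'\in J$ both dominate $i$. If $I$ is directed, pick $j''\in J$ above both $j$ and $j'$; this is possible because cofinality of $J$ gives an element of $J$ above any upper bound in $I$ of $j,j'$. Then \eqref{app:eq_inverse_compatibility} gives
\[
\pi_{ij}(b_j)=\pi_{ij}\pi_{jj''}(b_{j''})=\pi_{ij''}(b_{j''})=\pi_{ij'}(b_{j'}).
\]
In our applications $I=\N$, which is totally ordered, so we may assume $j\le j'$ and directly use $b_j=\pi_{jj'}(b_{j'})$. The same transitivity argument shows that $(a_i)$ is compatible. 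Both composites are the identity: restricting $(a_i)$ back to $J$ returns $(b_j)$ (take $j=i$), and reconstructing from the restriction of $\vec a$ gives $\pi_{ij}(a_j)=a_i$.

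For (2), define $\hat\phi(\vec a)_{f(k)}=\phi_k(a_k)$. Since $f$ is monotone, compatibility of the $\phi_k$ means $\pi^B_{f(k)f(k')}\circ\phi_{k'}=\phi_k\circ\pi^A_{kk'}$ for $k\le k'$. Hence this rule yields a compatible family indexed by the cofinal subset $f(K)\subset I$. If $f$ is not injective, the values agree by the same compatibility, because $\pi_{ii}=\mathrm{id}$.

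By part (1), this family extends uniquely to an element of $\hat B=\varprojlim_{I}B^i$, and this extension defines $\hat\phi$. The map respects the projections by construction. Uniqueness holds because any map respecting the projections is determined on the $f(K)$-components, and hence on all components by part (1).

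The only delicate point is the well-definedness step in (1), which needs directedness of $I$. I would state this assumption explicitly; it is automatic for the filtrations indexed by $\N$ used in the paper.
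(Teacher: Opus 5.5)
Your proof is correct and follows the paper's own argument: each $a_i$ is recovered as $\pi_{ij}(a_j)$ for any $j\ge i$ in the cofinal set, and the compatibility relation makes this independent of the choice; you simply write it out in full. Your directedness caveat is justified and is glossed over in the paper: for $I=\{0,a,b\}$ with $a,b$ incomparable and $J=\{a,b\}$, the limit over $J$ is the product $A^a\times A^b$, whereas the limit over $I$ is the fibre product $A^a\times_{A^0}A^b$, and in (2) you likewise need $K$ directed (both hold automatically for $I=K=\N_0$).
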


\noindent The proof relies on the fact that each $a_i$ in \eqref{app:eq_proj_limit} is uniquely determined by any $a_j$ with $j\ge i$ via the relation $a_i=\pi_{ij}(a_j)$. 
The compatibility \eqref{app:eq_inverse_compatibility} ensures the inverse limit is well-defined and independent of choices along cofinal subsets.

\smallskip\noindent From now on, we assume every poset $I$ is \emph{pointed}, i.e., it has a \emph{bottom element} $0=0_I$ satisfying $0_I\le i$ for all $i\in I$.

\begin{deff}\label{app:deff_filtration}\normalfont
A \emph{decreasing filtration} $F$ on a module $A$ indexed by the poset $I$ is a family of submodules $\{F^i A\}_{i\in I}$ such that \( A^j \subseteq A^i\) whenever $i\le j$. 
A decreasing filtration $F$ is 
\emph{exhaustive} if $A = F^0 A$, \emph{separated} if $\bigcap_{i\in I} F^i A = 0$, 
and \emph{complete} if it is both exhaustive and separated. 

\smallskip\noindent
The completion of a module $A$ with respect to a decreasing filtration $\{F^i A\}_{i\in I}$ is the inverse limit
\[
\widehat{A} =\widehat{A}_F = \varprojlim_i A/F^i A,
\]
where $\{A/F^i A\}_{i\in I}$ forms an inverse system with canonical projection maps 
\[
\pi_{ij}\colon A/F^j A \to A/F^i A
\]
for all $i \leq j$.
\end{deff}

\begin{deff}[Equivalent filtrations]\label{app:def_equiv}\normalfont
Let $\{F_1^i A\}_{i \in I}$, $\{F_2^k A\}_{k \in K}$ be two decreasing (exhaustive, separated) filtrations. We say that $F_1$ is \emph{finer} than $F_2$, written $F_1 \leq F_2$, if there exists a cofinal map $i\mto k(i)$ such that $F_1^i A \subset F_2^{k(i)} A$ for all $i \in I$. Filtrations are \emph{equivalent} if $F_1 \leq F_2$ and $F_2 \leq F_1$.
\end{deff}

\noindent The defined equivalence relation is reflexive, symmetric, and transitive. To check transitivity, it suffices to consider the composition of the corresponding cofinal maps (which is again cofinal).

\begin{deff}\label{app:def_filtered_algebra}\normalfont
    An associative $R$-algebra $A$, where $R$ is a commutative ring, is called \emph{filtered by a poset $I$} if it admits a decreasing filtration $(F_i)_{i\in I}$ as an $R$-module such that each $F_i$ is a two-sided ideal of $A$.
\end{deff}

\noindent Since each $F^i A$ is a two-sided ideal for all $i\in I$, the quotient $A^i \coloneqq A/F^i A$ is an $R$-algebra and all projection maps $\pi_{ij}$ are $R$-algebra morphisms. Then $\hat{A}_F$ is an $R$-algebra, where multiplication is defined componentwise via \eqref{app:eq_proj_limit}.

\begin{lemma}[Completion isomorphism]\label{app:lem_iso_compl}\normalfont
Equivalent filtrations $F_1\sim F_2$ on $R-$module $A$ yield isomorphic completions $\hat{A}_{F_1}\cong\hat{A}_{F_2}$. If $A$ is a filtered $R-$algebra, then this is also an isomorphism of algebras. 
\end{lemma}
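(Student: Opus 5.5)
The plan is to build mutually inverse maps between the two completions out of the cofinal maps that witness $F_1\le F_2$ and $F_2\le F_1$, and then check compatibility with the projections using part (2) of Theorem~\ref{app:thm_cofinal}.

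\emph{Step 1: the comparison maps.} Suppose $k\colon I\to K$ is a cofinal map with $F_1^iA\subset F_2^{k(i)}A$. Then the identity of $A$ descends to surjections
\[
\phi_i\colon A/F_1^iA\to A/F_2^{k(i)}A .
\]
These are compatible with the transition maps: for $i\le i'$ we have $k(i)\le k(i')$ by monotonicity, and both composites $A/F_1^{i'}A\to A/F_2^{k(i)}A$ are induced by $\mathrm{id}_A$, so they coincide. By Theorem~\ref{app:thm_cofinal}(2), the family $(\phi_i)$ yields a unique morphism $\Phi\colon\hat A_{F_1}\to\hat A_{F_2}$ respecting projections. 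Symmetrically, a cofinal $l\colon K\to I$ with $F_2^kA\subset F_1^{l(k)}A$ gives $\Psi\colon\hat A_{F_2}\to\hat A_{F_1}$.

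\emph{Step 2: $\Psi\circ\Phi=\mathrm{id}$.} Fix $i$. The projection $\pi_{l(k(i))}\circ\Psi\circ\Phi$ onto $A/F_1^{l(k(i))}A$ is obtained from projection to level $i'$, where $i'$ is chosen in $I$ with $i'\ge$ the required index, followed by maps induced by $\mathrm{id}_A$. Since any composite of maps induced by $\mathrm{id}_A$ between quotients is again the quotient map induced by $\mathrm{id}_A$, the component of $\Psi\Phi(\vec a)$ at a cofinal set of levels equals $\vec a$ composed with the canonical projection. Concretely, the element $\vec a=(a_i)$ is sent to $(a_{j})$ reduced modulo the appropriate ideals, and this reduction reproduces $a_{j'}$ whenever $j'\le j$. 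Because cofinal levels determine an element of the limit (Theorem~\ref{app:thm_cofinal}(1)), we get $\Psi\Phi=\mathrm{id}$. The same argument gives $\Phi\Psi=\mathrm{id}$.

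The main subtlety is bookkeeping. The composite $l\circ k$ need not dominate the identity of $I$, so one cannot compare levels directly. The cleanest fix is to describe $\Phi$ explicitly: $\Phi(\vec a)_k=\vec a$'s component at any $i$ with $k(i)\ge k$, reduced mod $F_2^kA$. This exists by cofinality and is independent of the choice of $i$ by compatibility. With this explicit description, both composites are visibly the identity.

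\emph{Step 3: algebra structure.} If each $F_j^iA$ is a two-sided ideal, then every $\phi_i$ is an $R$-algebra map, since it is a quotient of $\mathrm{id}_A$. The multiplication on the limits is componentwise, so $\Phi$ and $\Psi$ are algebra isomorphisms.
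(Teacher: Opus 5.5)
Your proof is correct and follows essentially the paper's route. The identity of $A$ descends to compatible quotient maps $A/F_1^iA\to A/F_2^{k(i)}A$ and $A/F_2^kA\to A/F_1^{l(k)}A$; these extend to the completions by Theorem~\ref{app:thm_cofinal}(2) and are mutually inverse, and the algebra case follows because quotient maps by ideals are algebra morphisms. Your explicit formula for $\Phi$ fills in the check the paper only calls ``readily verified''. Note that its independence of the choice of $i$, and recovering $a_j$ from $a_i$, use directedness of the index posets to pass to a common upper bound; the paper also assumes this tacitly, and it holds for $I=\N_0$.
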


\noindent\begin{proof}
A consequence of Theorem~\ref{app:thm_cofinal}. Assume $F_1$ and $F_2$ are complete filtrations indexed by posets $I$ and $K$, respectively, and let $f\colon I\to K$, $g\colon K\to I$ be cofinal maps satisfying
\[
F_1^i A \subset F_2^{f(i)} A, \quad F_2^k A \subset F_1^{g(k)} A \quad \text{for all } i\in I,\ k\in K.
\]
This induces compatible $R$-module (or $R$-algebra) morphisms
\[
\phi_i = \pi_{f(i)i} \colon A/F_1^i A \to A/F_2^{f(i)} A, \quad \psi_k = \pi_{g(k)k} \colon A/F_2^k A \to A/F_1^{g(k)} A
\]
for all $i\in I$, $k\in K$, which by Theorem~\ref{app:thm_cofinal} extend to $R$-module (or $R$-algebra) morphisms $\phi\colon \hat{A}_{F_1}\to \hat{A}_{F_2}$ and $\psi\colon \hat{A}_{F_2}\to \hat{A}_{F_1}$. It is readily verified that $\phi$ and $\psi$ are mutual inverses. $\square$
\end{proof}

\begin{rem}\normalfont{\mbox{}\vskip 2mm}\label{app:rem_completion_filtration}
\begin{itemize}
    \item The completion $\hat{A}=\hat{A}_F$ carries a canonical filtration
    \( \left(F^i\hat{A}\right)_{i\in I}\)
    induced componentwise from that on $A$ via the explicit description \eqref{app:eq_proj_limit} of the inverse limit. This filtration shares the same properties as the filtration \(F\) on \(A\): it is separated (exhaustive, complete) whenever \(F\) is.
    \item To be more precise:
    \begin{equation*}
        F^i\hat{A}=\{\vec{a}\in\hat{A}=\varprojlim_{j\in I} A/F^j A\,\mid\, a_j\in F^iA \bmod
        \left(F^iA \cap F^j A\right)\}.
    \end{equation*}
    \item There is a canonical morphism of filtered $R$-modules ($R$-algebras) $A\to\hat{A}$ given by
\[
a\mapsto(\bar{a}_i)_{i\in I}\in\hat{A}, \quad\text{where}\quad\bar{a}_i=a\bmod F^iA.
\]
When $F$ is separated, this morphism is injective: \(A\hookrightarrow  \hat{A} \).
\item When the poset \(I\) is \emph{directed}, that is, for any \(i,j \in I\) there exists \(k \in I\) with \(i \le k\) and \(j \le k\), this filtered morphism induces an isomorphism
\[
A \bmod F^i A \simeq \hat{A} \bmod F^i \hat{A}
\]
for each \(i \in I\).
Indeed,
\[
\hat{A} \bmod F^i \hat{A} = \Bigl\{ (b_j)_{j \in I} \;\Bigm|\; b_j \in A \bmod (F^i A + F^j A),\ \pi_{jk}(b_k) = b_j\ \forall\, k \ge j \Bigr\}.
\]
If \(I\) is directed, each \(b_j\) is uniquely determined by \(b_i\): there exists an upper bound \(k\) for the pair \((i,j)\), so
\[
b_j = \pi_{jk}(b_k) = \pi_{jk}(b_i),
\]
since \(F^i A = F^i A + F^k A\) and hence \(A \bmod (F^i A + F^k A) \simeq A \bmod F^i A\).
As a corollary, the completion of $A$ with respect to $F\hat{A}$ is canonically isomorphic to $\hat{A}$ as a filtered $R$-module ($R$-algebra).
\end{itemize}
   \end{rem}

\begin{lemma}\label{app:lem_morphism_completion}\normalfont
Let $(A,F_1)$ and $(B,F_2)$ be filtered $R$-modules (or $R$-algebras) indexed by posets $I$ and $K$, respectively, and let $\phi\colon A\to B$ be a morphism respecting the filtrations in the sense that there exists a cofinal map $f\colon I\to K$ satisfying
\[
\phi\left(F_1^i A\right) \subset F_2^{f(i)} B \quad \text{for all } i\in I.
\]
Then $\phi$ extends canonically to a morphism $\hat{\phi}\colon \hat{A}_{F_1}\to \hat{B}_{F_2}$ that respects the induced filtrations according to the same compatibility condition:
\[
\phi\left(F_1^i \hat{A}_{F_1}\right) \subset F_2^{f(i)} \hat{B}_{F_2} \quad \text{for all } i\in I.
\]
\end{lemma}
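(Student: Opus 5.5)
We need to prove Lemma app:lem_morphism_completion: a filtered morphism $\phi\colon A\to B$ with a cofinal $f$ extends to the completions and respects the induced filtrations.

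The plan is to reduce to Theorem~\ref{app:thm_cofinal}(2) by producing a compatible family of morphisms on the quotients. For each $i\in I$, the hypothesis $\phi(F_1^iA)\subset F_2^{f(i)}B$ means that $\phi$ descends to a well-defined $R$-linear map
\[
\phi_i\colon A/F_1^iA\to B/F_2^{f(i)}B,\qquad a \bmod F_1^iA\mapsto \phi(a)\bmod F_2^{f(i)}B .
\]
In the algebra case it is an algebra morphism, since the filtration pieces are two-sided ideals and $\phi$ is multiplicative. Next I check compatibility with the transition maps. For $i\le j$, monotonicity of $f$ gives $f(i)\le f(j)$, and both
\[
\pi^B_{f(i)f(j)}\circ\phi_j \quad\text{and}\quad \phi_i\circ\pi^A_{ij}
\]
send the class of $a$ to $\phi(a)\bmod F_2^{f(i)}B$, so they coincide. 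Because $f$ is cofinal, Theorem~\ref{app:thm_cofinal}(2) then yields a unique $\hat\phi\colon\hat A_{F_1}\to\hat B_{F_2}$ commuting with the projections. Concretely, $\hat\phi$ acts on $\vec a=(a_i)$: its $f(i)$-component is $\phi_i(a_i)$, and its remaining components are determined by the transition maps, using cofinality of $f(I)$.

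This extension is canonical in two senses. Uniqueness comes from the theorem. Its restriction to $A\subset\hat A$ agrees with $\phi$, since $\hat\phi(\bar a)$ has components $\phi(a)\bmod F_2^{f(i)}B$, which is exactly $\overline{\phi(a)}$.

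It remains to show the filtration property $\hat\phi(F_1^i\hat A)\subset F_2^{f(i)}\hat B$, using the description of the induced filtration in Remark~\ref{app:rem_completion_filtration}. Take $\vec a\in F_1^i\hat A$. For every $j$, the component $a_j$ has a representative in $F_1^iA$ modulo $F_1^jA$. I would check the $F_2^{f(i)}$ condition on the components $k=f(j)$ with $j\ge i$, which is legitimate because $f$ is cofinal and $I$ is directed. For such $j$ we have $\phi(F_1^iA)\subset F_2^{f(i)}B$, so $\phi_j(a_j)$ has a representative in $F_2^{f(i)}B$ modulo $F_2^{f(j)}B$. For a general index $k\in K$, choose $j$ with $f(j)\ge k$ and $j\ge i$, and project down; the projection preserves the property of having a representative in $F_2^{f(i)}B$.

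I expect the main obstacle to be this last bookkeeping step. It requires a common upper bound, so it is where directedness of $I$ (or of the image $f(I)$) is implicitly needed. I would state that assumption explicitly, noting that it holds for $\mathbb{N}$-indexed filtrations, which are the only ones used in the paper.
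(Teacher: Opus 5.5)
Your proposal is correct and follows the paper's route. The paper's proof simply says it is analogous to Lemma~\ref{app:lem_iso_compl}: descend $\phi$ to the quotient maps $A/F_1^iA\to B/F_2^{f(i)}B$, check compatibility with the transition maps, and invoke Theorem~\ref{app:thm_cofinal}(2). That is exactly what you do, and you additionally make the filtration check explicit.

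One small correction: in that final check the restriction $j\ge i$ is unnecessary. Every component $a_j$ of $\vec a\in F_1^i\hat A$ already has a representative in $F_1^iA$, so directedness is not needed there. It is needed, tacitly, in Theorem~\ref{app:thm_cofinal}(2) itself, to make the components of $\hat\phi(\vec a)$ independent of the chosen index.
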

\noindent
\begin{proof}
Similar to the proof of Lemma~\ref{app:lem_iso_compl}.
    $\square$
\end{proof}

\begin{rem}[Topological view]\label{app:top_remark}\normalfont
Equivalent filtrations on a module  induce the same \emph{filtration topology}, where the filtration submodules form a fundamental system of neighborhoods of zero. 
Consequently, their associated completions are canonically isomorphic as topological objects. Moreover, any morphism that is continuous with respect to one filtration (i.e., respects the filtration) is automatically continuous with respect to the equivalent filtration. 
By Lemma~\ref{app:lem_morphism_completion}, any filtration-compatible morphism lifts canonically to a morphism of completions. Moreover, this lifting is functorial with respect to composition.
\end{rem}

\smallskip\noindent   Henceforth, we assume that $I = \N_0=\N \cup \{0\}$ with the standard ordering. 

\begin{deff}\label{def:Cauchy}\normalfont
A sequence $(a_n)_{n \in \mathbb{N}_0}$ is called a \emph{Cauchy sequence} if for every $k \geq 0$ there exists $n(k) \in \mathbb{N}_0$ such that
\begin{equation}\label{eq:Cauchy_stability}
  a_{n(k)} \equiv a_{n(k)+1} \equiv a_{n(k)+2} \equiv \cdots \pmod{F^k A}.  
\end{equation}
Two sequences $(a_n)$ and $(b_m)$ are called \emph{equivalent} if for every $k \geq 0$ there exists $r(k) \in \mathbb{N}$ such that
\begin{equation}\label{eq:seq_equiv}
  a_{r(k)} - b_{r(k)} \equiv a_{r(k)+1} - b_{r(k)+1} \equiv a_{r(k)+2} - b_{r(k)+2} \equiv \cdots \pmod{F^k A}.  
\end{equation}
In both cases, $n(k)$ and $r(k)$ are taken to be minimal for each $k \geq 0$.
\end{deff}

\noindent The proof of the following proposition follows standard arguments and can be easily verified by the reader or located in standard textbooks on filtrations.

\begin{proposition}\label{prop:topology}\normalfont
    The next statements hold:
\begin{enumerate}
    \item $F_1\le F_2$ if and only if for each $l$ there exists $k$ such that $F_1^k A \subset F_2^l A$;
    \item a morphism $\phi \colon A \to B$ of filtered modules (rings, algebras) is \emph{continuous} in the filtration topology, i.e., for each $l \ge 0$ there exists $k$ such that $\phi(F^k A) \subset F^l B$, if and only if it is compatible with the filtrations, i.e., there exists a cofinal sequence $(l_k)_{k \in \mathbb{N}_0}$ such that $\phi(F^k A) \subset F^{l_k} B$ (cf.\ Theorem~\ref{app:thm_cofinal});
    \item if $F_1 \leq F_2$ then a Cauchy sequence with respect to $F_1$ is also a Cauchy sequence with respect to $F_2$;
    \item the sequential completion of $A$ is isomorphic to $\hat{A}$.
\end{enumerate}
\end{proposition}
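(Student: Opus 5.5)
The last statement is Proposition~\ref{prop:topology}, a list of standard facts about filtered modules indexed by $\N_0$. I would prove the four items in order, using only the definitions in this appendix, Theorem~\ref{app:thm_cofinal} and Lemma~\ref{app:lem_iso_compl}.

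For (1), suppose $F_1\le F_2$ with cofinal $k\mapsto l_k$ and $F_1^kA\subset F_2^{l_k}A$. Given $l$, cofinality yields $k$ with $l_k\ge l$. Since the filtration is decreasing, $F_1^kA\subset F_2^{l_k}A\subset F_2^lA$. Conversely, assume for each $l$ some $k$ satisfies $F_1^kA\subset F_2^lA$. Put
\[
l_k=\max\{l\le k \mid F_1^kA\subset F_2^lA\},
\]
which is well defined because $l=0$ always works ($F_2^0A=A$). This $l_k$ is monotone in $k$, since $F_1^{k+1}A\subset F_1^kA$. It is also unbounded: for a given $l$, choose $k_0$ with $F_1^{k_0}A\subset F_2^lA$; then every $k\ge\max(k_0,l)$ has $l_k\ge l$. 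Item (2) is the same argument with $\phi(F^kA)$ in place of $F_1^kA$, using that $\phi$ preserves inclusions of submodules.

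For (3), take an $F_1$-Cauchy sequence and fix $l$. By (1) there is $k$ with $F_1^kA\subset F_2^lA$. Beyond the index $n(k)$, all differences lie in $F_1^kA\subset F_2^lA$, so the sequence is $F_2$-Cauchy.

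For (4), I define a map from Cauchy sequences to $\hat A$ by sending $(a_n)$ to $(\bar a_{n(k)} \bmod F^kA)_k$. The Cauchy condition makes these components compatible under the $\pi_{ij}$. Equivalent sequences have the same image, and the map is additive (and multiplicative for algebras). For injectivity, a sequence with zero image eventually lies in every $F^kA$, so it is equivalent to the zero sequence. For surjectivity, given $(\bar b_k)\in\hat A$, choose lifts $b_k\in A$; the compatibility $b_{k+1}\equiv b_k \pmod{F^kA}$ shows $(b_k)$ is Cauchy and maps to the given element. The only point needing care is that the map is well defined independently of the minimal indices $n(k)$, which follows because any later index gives the same residue mod $F^kA$. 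I expect no real obstacle; the most delicate bookkeeping is the monotone, cofinal choice of $l_k$ in (1).
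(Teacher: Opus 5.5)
Your proof is correct; the paper gives no argument beyond calling these facts standard, and your verification is exactly the routine one intended. In particular, the capped choice $l_k=\max\{l\le k \mid F_1^kA\subset F_2^lA\}$ keeps the maximum finite and monotone.

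Note that in (4) you rightly use equivalence in the sense $a_n-b_n\in F^kA$ for all $n\ge r(k)$. The paper's displayed condition \eqref{eq:seq_equiv} literally only asks that $(a_n-b_n)$ be Cauchy, which would make all Cauchy sequences equivalent, so your reading is the one under which (4) holds.
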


\begin{rem}[Sequential completeness]\label{app:rem_sequential_completeness}\normalfont
 While completion via Cauchy sequences is classical for metric spaces, the filtration topology provides another setting where this sequential completion is well-defined. 

\noindent As shown in Proposition~\ref{prop:topology}, there is a bijection between equivalence classes of Cauchy sequences in $A$ and elements of its completion $\hat{A}_F$. 

\end{rem}

\begin{deff}[Graded filtered algebra]\label{app:def_graded_filtered}
A $\Gamma$-graded algebra $A=\bigoplus_{\gamma\in\Gamma}A_\gamma$, $A_{\gamma_1}A_{\gamma_2}\subset A_{\gamma_1+\gamma_2}$, is called \emph{graded-filtered} if it is filtered in the sense of Definition~\ref{app:def_filtered_algebra} and each $F^iA$ is a \emph{graded ideal},
\[
F^iA=\bigoplus_{\gamma\in\Gamma}F_iA_\gamma, \quad F^iA_\gamma=(F^iA)\cap A_\gamma.
\]
\end{deff}

\begin{proposition}[Graded completions]\label{prop:graded-completion}\normalfont
A $\mathbb{Z}$-graded filtered algebra $A$ admits a \emph{graded completion} with respect to the graded filtration, given componentwise by $\widehat{A} = \bigoplus_{\gamma\in\Gamma} \widehat{A}_\gamma$, where $\widehat{A_\gamma}=\varprojlim_i A_\gamma/(F^i A_\gamma)$. Equivalent filtrations (on each $A_\gamma$ independently) yield isomorphic graded completions.
\end{proposition}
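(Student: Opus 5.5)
The statement to prove is Proposition~\ref{prop:graded-completion}: a $\Z$-graded filtered algebra has a graded completion, and equivalent filtrations on each $A_\gamma$ give isomorphic graded completions. The plan has three steps.

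\emph{Step 1: each component is a filtered module.} Fix $\gamma$. Since every $F^iA$ is a graded ideal, Definition~\ref{app:def_graded_filtered} gives $F^iA=\bigoplus_\gamma F^iA_\gamma$. Hence $\{F^iA_\gamma\}_{i}$ is a decreasing filtration of the $R$-module $A_\gamma$. Moreover, the quotient $A/F^iA$ is graded with components $A_\gamma/F^iA_\gamma$. The projections $\pi_{ij}$ preserve the grading, so the inverse system $A/F^iA$ splits componentwise. We then set $\widehat{A_\gamma}=\varprojlim_i A_\gamma/F^iA_\gamma$, using Definition~\ref{app:deff_filtration}, and $\widehat{A}=\bigoplus_\gamma\widehat{A_\gamma}$.

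\emph{Step 2: the multiplication.} Since $F^iA$ is an ideal, $A_{\gamma_1}F^iA_{\gamma_2}\subset F^iA_{\gamma_1+\gamma_2}$. So multiplication descends to compatible bilinear maps
\[
A_{\gamma_1}/F^iA_{\gamma_1}\times A_{\gamma_2}/F^iA_{\gamma_2}\to A_{\gamma_1+\gamma_2}/F^iA_{\gamma_1+\gamma_2},
\]
one for each $i$. These commute with the $\pi_{ij}$, so they pass to the limit, defined componentwise as in \eqref{app:eq_proj_limit}. This yields bilinear maps $\widehat{A_{\gamma_1}}\times\widehat{A_{\gamma_2}}\to\widehat{A_{\gamma_1+\gamma_2}}$, which we extend bilinearly to the direct sum. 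Associativity, the unit and supercommutativity (when present) hold at each finite level $i$, so they hold in the limit. This makes $\widehat{A}$ a $\Z$-graded algebra.

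\emph{Step 3: equivalent filtrations.} Suppose $F_1$ and $F_2$ are equivalent on each $A_\gamma$. The cofinal maps $f_\gamma$ and $g_\gamma$ are allowed to depend on $\gamma$. Lemma~\ref{app:lem_iso_compl}, applied to the $R$-module $A_\gamma$, gives mutually inverse isomorphisms $\phi_\gamma\colon\widehat{A_\gamma}^{F_1}\to\widehat{A_\gamma}^{F_2}$. These are induced by the identity of $A_\gamma$ through Theorem~\ref{app:thm_cofinal}. Let $\phi=\bigoplus_\gamma\phi_\gamma$; it is a graded linear isomorphism.

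\emph{The main obstacle} is showing that $\phi$ is multiplicative, because the cofinal maps differ from component to component. The plan is to avoid comparing indices at all and to use density instead. Each $\phi_\gamma$ is characterized as the unique extension of the identity on the image of $A_\gamma$. Take $a\in\widehat{A_{\gamma_1}}$ and $b\in\widehat{A_{\gamma_2}}$, and represent them by Cauchy sequences $(a_n)$ and $(b_n)$ in $A_{\gamma_1}$ and $A_{\gamma_2}$, via Proposition~\ref{prop:topology}(4).

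First, $(a_nb_n)$ represents $ab$ in the $F_1$-completion: modulo $F_1^kA$, both factors stabilize for large $n$, because the filtration consists of ideals. Second, by Proposition~\ref{prop:topology}(3) applied in each component, $(a_n)$ and $(b_n)$ are also $F_2$-Cauchy, and $\phi$ sends their classes to the corresponding $F_2$-classes. By the same stabilization argument, now for $F_2$, the class of $(a_nb_n)$ in $\widehat{A_{\gamma_1+\gamma_2}}^{F_2}$ equals $\phi(a)\phi(b)$. Since $\phi_{\gamma_1+\gamma_2}$ maps the $F_1$-class of the Cauchy sequence $(a_nb_n)$ to its $F_2$-class, we get $\phi(ab)=\phi(a)\phi(b)$.

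Everything else is routine bookkeeping of inverse limits.
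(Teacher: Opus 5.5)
Your proof is correct. It follows a somewhat different route from the paper, and it also covers more of the statement.

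The paper builds the product on $\widehat{A}$ only through Cauchy sequences. It checks that $(a_na'_n)$ is Cauchy in $A_{\gamma_1+\gamma_2}$ whenever $(a_n)$ and $(a'_n)$ are Cauchy in $A_{\gamma_1}$ and $A_{\gamma_2}$, and that this respects equivalence classes. It never argues the second sentence of the proposition separately; presumably it is meant to follow from Lemma~\ref{app:lem_iso_compl} applied to each $A_\gamma$.

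You instead define the product level-wise on the quotients $A_\gamma/F^iA_\gamma$ and pass to the inverse limit. This is cleaner and does not use that the index set is $\N_0$. You also isolate the point the paper glosses over. The cofinal maps depend on $\gamma$, as in Theorem~\ref{thm:equiv_compl}, where $(l_k)$ and $(k_l)$ depend on $r$. So the algebra version of Lemma~\ref{app:lem_iso_compl} cannot just be applied to $A$ as a whole, and multiplicativity of $\bigoplus_\gamma\phi_\gamma$ needs its own argument.

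Your density argument supplies that argument. One step is asserted rather than checked: that $\phi_\gamma$ sends the $F_1$-class of a Cauchy sequence to its $F_2$-class. It follows directly by unwinding the construction in Theorem~\ref{app:thm_cofinal}(2).

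If you want to avoid Proposition~\ref{prop:topology}(4), and with it the countability of the index set, you can stay level-wise throughout. For a given $l$, choose $i$ with $F_1^iA_\gamma\subset F_2^lA_\gamma$ simultaneously for the three weights $\gamma_1,\gamma_2,\gamma_1+\gamma_2$. Lift the level-$i$ components of $a$ and $b$ to $\tilde a\in A_{\gamma_1}$ and $\tilde b\in A_{\gamma_2}$. Then both $\phi(ab)_l$ and $\phi(a)_l\phi(b)_l$ equal $\tilde a\tilde b \bmod F_2^lA_{\gamma_1+\gamma_2}$.
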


\noindent\begin{proof}
In a $\Gamma$-graded algebra $A$, the homogeneous multiplications $A_{\gamma_1}\times A_{\gamma_2}\to A_{\gamma_1+\gamma_2}$ are continuous in the filtration topology and thus extend to $\widehat{A}_{\gamma_1}\times\widehat{A}_{\gamma_2}\to\widehat{A}_{\gamma_1+\gamma_2}$.

\noindent
Indeed, let $(a_n)_{n\in\mathbb{N}}$, $(a'_n)_{n\in\mathbb{N}}$ be Cauchy sequences in $A_{\gamma_1}$, $A_{\gamma_2}$. For each $k$, there exist $n(k)$, $n'(k)$ such that
\[
a_n\equiv a_{n(k)}\pmod{F_kA_{\gamma_1}}\ \forall n\ge n(k), \quad a'_n\equiv a'_{n'(k)}\pmod{F_kA_{\gamma_2}}\ \forall n\ge n'(k).
\]
Their product $(a_na'_n)_n$ is Cauchy:
\[
a_na'_n\equiv a_{n(k)}a'_{n'(k)}\pmod{F_kA_{\gamma_1+\gamma_2}}\ \forall n\ge\max(n(k),n'(k)).
\]
Moreover, this respects Cauchy equivalence classes, yielding a well-defined $\Gamma$-graded completion of $A$.  $\square$
\end{proof}


\section{Graded Borel theorem: local approach}\label{sec:local_Borel}

\noindent In this section, we review the local graded Borel theorem as presented in \cite{KS2024}.

\smallskip\noindent
Consider an open set $U\subset V_0$ and the graded algebra $\hat{A}(U)$ of formal power series over $U$ in the notations of Section~\ref{sec:semiformal} and Section~\ref{sec:formal_homogeneity}. Let $\xi=\{\xi_1, \ldots, \xi_n\}$ and $\eta=\{\eta_1, \ldots, \eta_m\}$  be homogeneous coordinates of weights $\bm{\alpha}=(\alpha_1,\ldots,\alpha_n)$ and $-\bm{\beta}=-(\beta_1,\ldots,\beta_m)$, respectively (all integers $\alpha_i,\beta_j>0$). 

\begin{proposition}\label{prop:graded-series}\normalfont
Every $f\in\hat{A}(U)$ is a polynomial in ${\xi},{\eta}$ with coefficients that are formal power series in finitely many weight-$0$ monomials.
\end{proposition}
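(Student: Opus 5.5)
The plan is to reduce the statement to a finiteness property of the set of exponents of homogeneous monomials. Since $\hat{A}(U)=\bigoplus_r \hat{A}_r(U)$ and every element has only finitely many nonzero homogeneous components, it suffices to treat a homogeneous $f$ of a fixed weight $r$.

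\textbf{Step 1: $f$ as a formal series.} By Theorem~\ref{thm:equiv_compl}, $\hat{A}_r(U)$ may be computed with respect to ${F_{pol}}$. For each total degree $l$ there are only finitely many monomials $\xi^p\eta^q$ of weight $r$. Hence the inverse limit identifies $f$ with a formal sum
\[
f=\sum_{(p,q)\in S_r} c_{p,q}(x)\,\xi^p\eta^q,\qquad c_{p,q}\in C^\infty(U),
\]
where $S_r=\{(p,q)\in\N_0^{n}\times\N_0^m : \sum_i\alpha_ip_i-\sum_j\beta_jq_j=r\}$. Odd coordinates simply restrict their exponents to $\{0,1\}$; this only removes elements of $S_r$ and does not affect the argument.

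\textbf{Step 2: combinatorics of exponents.} The set $S_0$ is the monoid of nonnegative integer solutions of one homogeneous linear equation. By Gordan's lemma (equivalently, Dickson's lemma applied to its minimal nonzero elements), $S_0$ is generated by a finite Hilbert basis $h_1,\dots,h_N$, with corresponding weight-$0$ monomials $w_k=\xi^{h_k'}\eta^{h_k''}$. Likewise, $S_r$ is stable under addition of elements of $S_0$. By Dickson's lemma, $S_r$ has finitely many componentwise-minimal elements $m_1,\dots,m_K$. Every $s\in S_r$ can then be written as $s=m_a+t$ with $t\in S_0$, so $t=\sum_k e_kh_k$ with $e\in\N_0^N$.

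For each $s$ I fix one such decomposition, for instance the lexicographically smallest pair $(a,e)$. This gives a map $s\mapsto(a(s),e(s))$ that is injective on $S_r$, because $s$ is recovered as $m_{a}+\sum_k e_kh_k$. I then define
\[
g_a(x;t_1,\dots,t_N)=\sum_{s:\,a(s)=a}\pm\, c_s(x)\,t^{e(s)}\in C^\infty(U)[[t_1,\dots,t_N]],
\]
where the signs come from reordering odd factors.

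\textbf{Step 3: substitution.} The claim to verify is
\[
f=\sum_{a=1}^K \xi^{m_a'}\eta^{m_a''}\,g_a(x;w_1,\dots,w_N),
\]
which exhibits $f$ as a polynomial in $\xi,\eta$ whose coefficients are formal power series in the finitely many weight-$0$ monomials $w_k$.

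The main obstacle is that the substitution $t_k\mapsto w_k$ must be well defined and continuous. I would handle this by noting that each $w_k$ lies in $\cI$, because a nonzero weight-$0$ monomial has positive degree. Consequently, $t^e\mapsto w^e$ lands in ${F_{pol}}^{|e|}$. The substitution is therefore a filtration-compatible morphism from the $(t)$-adic filtration to ${F_{pol}}$, and it extends to completions by Lemma~\ref{app:lem_morphism_completion}.

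It then remains to compare coefficients. Injectivity of $s\mapsto(a(s),e(s))$ ensures that each monomial $\xi^p\eta^q$ arises exactly once on the right-hand side, with coefficient $c_{p,q}$. Equality then holds modulo ${F_{pol}}^l$ for every $l$, and hence holds in the completion.
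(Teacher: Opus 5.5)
Your proposal is correct and follows essentially the paper's argument. The exponent set of weight-$r$ monomials splits as finitely many minimal particular solutions plus the finitely generated monoid of weight-$0$ solutions; this is the paper's finiteness lemma, which you obtain from Dickson's and Gordan's lemmas. As a result $f$ becomes a finite sum of weight-$r$ monomials times formal series in the finitely many generating weight-$0$ monomials. Your fixed choice of decomposition for each exponent, which is needed because representations in the Hilbert basis are not unique, and your check that the substitution $t_k\mapsto w_k$ is filtration-compatible make explicit two points the paper's sketch leaves implicit.
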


\noindent\begin{proof}
Consider the general form~\eqref{eq:series} of a function in $\hat{A} (U)$:
\begin{equation}\label{eq:series}
f (x, {\xi}, {\eta})= \sum_{\bm{p},\bm{q}} f_{\bm{p}\bm{q}}(x) {\xi}^{\bm{p}} {\eta}^{\bm{q}},
\end{equation}
where $\bm{p}=(p_1,\ldots,p_n)$ and $\bm{q}=(q_1,\ldots,q_m)$ are multi-indices. For simplicity, omit odd variables (they yield no infinite sums). The goal is to rearrange terms so infinite series appear only in degree-zero parts. Homogeneity of weight $r$ requires $\bm{\alpha}\cdot\bm{p}-\bm{\beta}\cdot\bm{q}=r$ (cf.~\eqref{eq:total_weight}).

\smallskip\noindent
This Diophantine problem is classical in semigroup theory~\cite{clifford1,clifford2}: the solution monoid is finitely generated\footnote{This reappears in computer algebra~\cite{clausen}.}. Let $S(\bm{\alpha},\bm{\beta},r)$ be nonnegative solutions to $\bm{\alpha}\cdot\bm{p}-\bm{\beta}\cdot\bm{q}=r$, and $S(\bm{\alpha},\bm{\beta})$ those for $r=0$. Let $M(\bm{\alpha},\bm{\beta},r)\subset S(\bm{\alpha},\bm{\beta},r)$ and $M(\bm{\alpha},\bm{\beta})\subset S(\bm{\alpha},\bm{\beta})\setminus\{0\}$ be minimal w.r.t.~$\N_{\ge0}^{n+m}$-order.

\begin{lemma}[Finiteness properties]\label{lem:finiteness}{\mbox{}\vskip 2mm}
\begin{enumerate}
\item $M(\bm{\alpha},\bm{\beta},r)$ and $M(\bm{\alpha},\bm{\beta})$ are finite.
\item $S(\bm{\alpha},\bm{\beta})$ consists of $\N_{\ge 0}$-combinations of $M(\bm{\alpha},\bm{\beta})$.
\item $S(\bm{\alpha},\bm{\beta},r)=M(\bm{\alpha},\bm{\beta},r)+S(\bm{\alpha},\bm{\beta})$.
\end{enumerate}
\end{lemma}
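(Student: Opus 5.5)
We need to prove Lemma (Finiteness properties): $M(\alpha,\beta,r)$ and $M(\alpha,\beta)$ are finite; $S(\alpha,\beta)$ is generated by $M(\alpha,\beta)$; and $S(\alpha,\beta,r)=M(\alpha,\beta,r)+S(\alpha,\beta)$.

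The plan is to reduce everything to Dickson's lemma together with one exchange argument. For part (1), we observe that $M(\bm{\alpha},\bm{\beta},r)$ and $M(\bm{\alpha},\bm{\beta})$ are, by definition, sets of pairwise incomparable elements of $\N_{\ge0}^{n+m}$ with respect to the componentwise order. Dickson's lemma says every antichain in $\N_{\ge 0}^{n+m}$ is finite, and this gives finiteness at once. (We could alternatively prove it by induction on $n+m$, extracting a monotone subsequence coordinatewise from any infinite sequence.) For part (2), we argue by induction on the $\ell^1$-norm $|\bm{p}|+|\bm{q}|$. Take a nonzero $s=(\bm{p},\bm{q})\in S(\bm{\alpha},\bm{\beta})$. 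Since the order is well-founded, there exists a minimal nonzero element $t\in S(\bm{\alpha},\bm{\beta})$ with $t\le s$, and then $t\in M(\bm{\alpha},\bm{\beta})$. The difference $s-t$ is again a nonnegative integer vector, and it satisfies the homogeneous equation $\bm{\alpha}\cdot\bm{p}-\bm{\beta}\cdot\bm{q}=0$ because that equation is linear. So $s-t\in S(\bm{\alpha},\bm{\beta})$ has strictly smaller norm, and induction finishes the argument.

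For part (3), the inclusion $\supseteq$ is immediate from linearity: adding a weight-$0$ solution to a weight-$r$ solution gives a weight-$r$ solution. For $\subseteq$, we take $s\in S(\bm{\alpha},\bm{\beta},r)$ and pick a minimal $u\in M(\bm{\alpha},\bm{\beta},r)$ with $u\le s$; such a $u$ exists by well-foundedness. Then $s-u\ge 0$ and $\bm{\alpha}\cdot(\bm{p}-\bm{p}_u)-\bm{\beta}\cdot(\bm{q}-\bm{q}_u)=r-r=0$, so $s-u\in S(\bm{\alpha},\bm{\beta})$.

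No step is a real obstacle. The only point needing care is that $M(\bm{\alpha},\bm{\beta})$ is defined inside $S\setminus\{0\}$, so in part (2) we must minimize over nonzero elements; that is what makes the norm strictly decrease. The only nontrivial input is Dickson's lemma, which we either cite (as in \cite{clifford1,clifford2}) or prove by the subsequence argument.
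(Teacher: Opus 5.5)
Your proof is correct, and it takes a different route from the paper only in that the paper gives no proof at all. It appeals to classical semigroup theory for the fact that the monoid of nonnegative solutions of a homogeneous linear Diophantine equation is finitely generated, and leaves the rest implicit.

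Your argument is the standard self-contained one behind such results:
\begin{itemize}
\item Dickson's lemma gives finiteness of the antichains $M(\bm{\alpha},\bm{\beta},r)$ and $M(\bm{\alpha},\bm{\beta})$.
\item Subtracting a minimal solution lying below a given one gives the decompositions in parts (2) and (3).
\end{itemize}

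This buys more than the bare citation does. Knowing only that $S(\bm{\alpha},\bm{\beta})$ is finitely generated does not identify the generators as exactly the minimal nonzero solutions. It also does not give finiteness of $M(\bm{\alpha},\bm{\beta},r)$, or the decomposition $S(\bm{\alpha},\bm{\beta},r)=M(\bm{\alpha},\bm{\beta},r)+S(\bm{\alpha},\bm{\beta})$. Your exchange argument delivers all three at once, with Dickson's lemma as the only input.

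One tacit point is worth a line. In part (2), \emph{consists of} is an equality, so you should also record the reverse inclusion: every $\N_{\ge 0}$-combination of elements of $M(\bm{\alpha},\bm{\beta})$ lies in $S(\bm{\alpha},\bm{\beta})$. This follows from the same linearity remark you make in part (3).
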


\noindent Solutions thus decompose into finite particular plus finitely generated homogeneous parts. Assuming finite dimension, $f$ is a finite sum of weight-$r$ monomials parametrized by $M(\bm{\alpha},\bm{\beta},r)$ times weight-zero series in monomials parametrized by $M(\bm{\alpha},\bm{\beta})$.

\smallskip\noindent Explicitly, for $r\ne 0$,
\beq\label{eq:f-series}
f(x,{\xi},{\eta})=\sum_{(\bm{p},\bm{q})\in M(\bm{\alpha},\bm{\beta},r)} h_{\bm{p}\bm{q}}(x,z({\xi},{\eta})) {\xi}^{\bm{p}} {\eta}^{\bm{q}},
\eeq
where $h_{\bm{p}\bm{q}}(x,z)$ are smooth in $x$ and formal series in weight-zero $z_{\bm{p}\bm{q}}={\xi}^{\bm{p}}{\eta}^{\bm{q}}$, $(\bm{p},\bm{q})\in M(\bm{\alpha},\bm{\beta})$. This proves the claim. $\qedhere$
\end{proof}

\begin{theorem}[$\Z$-graded Borel lemma]\label{cor:gr-borel}\normalfont
Every homogeneous local formal power series is the Taylor expansion of a local smooth function of the same weight.
\end{theorem}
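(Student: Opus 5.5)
The plan is to reduce Theorem~\ref{cor:gr-borel} to the classical Borel lemma by means of the normal form given in Proposition~\ref{prop:graded-series}. Let $f\in\hat{A}(U)_r$ be homogeneous of weight $r$. By Proposition~\ref{prop:graded-series} and Lemma~\ref{lem:finiteness}, write
\[
f(x,\xi,\eta)=\sum_{(\bm{p},\bm{q})\in M(\bm{\alpha},\bm{\beta},r)} h_{\bm{p}\bm{q}}\bigl(x,z(\xi,\eta)\bigr)\,\xi^{\bm{p}}\eta^{\bm{q}},
\]
where the index set is finite, and each $h_{\bm{p}\bm{q}}(x,z)$ is smooth in $x\in U$ and a formal power series in the finitely many weight-zero monomials $z_{\bm{s}\bm{t}}=\xi^{\bm{s}}\eta^{\bm{t}}$, $(\bm{s},\bm{t})\in M(\bm{\alpha},\bm{\beta})$. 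Odd variables contribute only finitely many terms and can be carried along as polynomial coefficients. The same representation covers $r=0$ if we take $M(\bm{\alpha},\bm{\beta},0)=\{0\}$.

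Next, regard each $h_{\bm{p}\bm{q}}$ as a formal series in auxiliary independent even variables $z=(z_1,\dots,z_N)$ with coefficients in $C^\infty(U)$. By the classical (parametric) Borel lemma, there is a function $\tilde h_{\bm{p}\bm{q}}\in C^\infty(U\times\mathbb{R}^N)$ whose Taylor series in $z$ at $z=0$ equals $h_{\bm{p}\bm{q}}$. One can use the standard construction $\sum_{\bm{k}} c_{\bm{k}}(x)\chi(\lambda_{\bm{k}}z)z^{\bm{k}}$, where $\chi$ is a cutoff and the $\lambda_{\bm{k}}$ grow fast enough, locally uniformly in $x$ by exhausting $U$ by compacts. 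Then put
\[
\tilde f=\sum_{(\bm{p},\bm{q})} \tilde h_{\bm{p}\bm{q}}\bigl(x,z(\xi,\eta)\bigr)\,\xi^{\bm{p}}\eta^{\bm{q}},
\]
which is a smooth function on $U\times V_{\ne0}$.

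It remains to check two properties. For weight, observe that each $z_{\bm{s}\bm{t}}$ satisfies $\e(z)=0$ and is therefore invariant under the flow of $\e$. Hence any smooth function of $x$ and $z$ is annihilated by $\e$. Since $\e(\xi^{\bm{p}}\eta^{\bm{q}})=r\,\xi^{\bm{p}}\eta^{\bm{q}}$, we get $\e(\tilde f)=r\tilde f$, so $\tilde f$ is homogeneous of weight $r$ in the sense of Proposition~\ref{prop:graded_inside_formal}. For the Taylor expansion, composing with the polynomial map $z(\xi,\eta)$ commutes with taking Taylor series at the zero section. The Taylor series of $\tilde h_{\bm{p}\bm{q}}(x,z(\xi,\eta))$ is therefore $h_{\bm{p}\bm{q}}(x,z(\xi,\eta))$, and so the Taylor series of $\tilde f$ is exactly $f$.

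The main obstacle I expect is the substitution step: the formal identification of $f$ with $\sum h_{\bm{p}\bm{q}}(x,z(\xi,\eta))\xi^{\bm{p}}\eta^{\bm{q}}$ must be genuinely convergent in the $F_{pol}$-topology. This holds because each $z_{\bm{s}\bm{t}}$ lies in $\cI$, since it is a nonzero monomial and hence of positive polynomial degree. The formal series composition is then well defined, and Taylor expansion is compatible with it. The non-uniqueness of the representation (relations among the $z$'s) is harmless, since we only need one lift. If $U$ is not relatively compact, the Borel construction must be made locally uniform, which is standard via a partition of unity on $U$.
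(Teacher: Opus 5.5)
Your proposal is correct and follows the paper's own argument: both put $f$ in the normal form of Proposition~\ref{prop:graded-series}, apply the classical (parametric) Borel lemma to each coefficient series $h_{\bm{p}\bm{q}}(x,z)$ in auxiliary weight-zero variables, and then substitute $z=z(\xi,\eta)$. You also spell out two checks the paper leaves implicit: the weight check via the chain rule ($\epsilon(x)=0$, $\epsilon(z)=0$), and the fact that taking Taylor series commutes with substituting a polynomial map that vanishes on the zero section.
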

\noindent\begin{proof}
By classical Borel lemma, choose smooth $\tilde{h}_{\bm{p}\bm{q}}(x,z)$ with Taylor series $h_{\bm{p}\bm{q}}(x,z)$ in~\eqref{eq:f-series}. Set
\[
\tilde{h}(x,{\xi},{\eta}):=\sum_{(\bm{p},\bm{q})\in M(\bm{\alpha},\bm{\beta},r)} \tilde{h}_{\bm{p}\bm{q}}(x,z({\xi},{\eta})){\xi}^{\bm{p}}{\eta}^{\bm{q}}.
\]
$\qedhere$
\end{proof}

\section*{Acknowledgments}

\smallskip\noindent
The authors thank Alina Rogozna for her fundamental contribution to Appendix~\ref{sec:app}.
The research was supported by the grant ``Graded differential geometry with applications'' GA\u{C}R 24-10031K of the Czech Science Foundation.

\bibliography{BibGraded1}

\end{document}